\def\cput(#1,#2)#3{\put(#1,#2){\hbox to 0pt{\hss{#3}\hss}}}
\def\lput(#1,#2)#3{\put(#1,#2){\hbox to 0pt{\hss{#3}}}}
\title[\resizebox{5.3in}{!}{An algebraic approach to polynomial reproduction of Hermite subdivision schemes}]{An algebraic approach to polynomial reproduction of Hermite subdivision schemes}
\author{Costanza Conti, Svenja H\"uning}
\address{Costanza Conti (costanza.conti@unifi.it). DIEF, Universit\`{a} di Firenze. Viale Morgagni 40/44, 50134 Firenze, Italy. Svenja H\"uning (huening@tugraz.at). Institut f.\ Geometrie, TU Graz. Kopernikusgasse 24, 8010 Graz, Austria.}
\theoremstyle{plain}
\newtheorem{thm}{Theorem}
\newtheorem{lem}[thm]{Lemma}
\newtheorem{prop}[thm]{Proposition}
\newtheorem{conj}[thm]{Conjecture}
\theoremstyle{definition}
\newtheorem{Def}[thm]{Definition}
\newtheorem{ex}[thm]{Example}
\theoremstyle{remark}
\newtheorem{Rem}[thm]{Remark}
\def \<{\langle}
\def \>{\rangle}
\begin{document}
\long\def\nix#1{}

\maketitle
\begin{abstract}
We present an accurate investigation of the algebraic conditions that the symbols of a
univariate, binary, Hermite subdivision scheme have to fulfil in order to reproduce polynomials. These conditions are sufficient for the scheme  to
satisfy the so called spectral condition. The latter requires the existence of particular polynomial eigenvalues of the stationary counterpart of the Hermite scheme.
In accordance with the known Hermite schemes, we here consider the case of a Hermite scheme dealing with function values, first and second derivatives.
Several examples of application of the proposed algebraic conditions are given in both the primal and the dual situation.
\end{abstract}

\section{Introduction}
Hermite subdivision are linear operators which act on vector data but with the particular understanding that these vectors represent function values and consecutive derivatives up to a certain order
(see \cite{conti17, merrien5, merrien1}, for example). They find application in different contexts ranging from interpolation/approximation \cite{han2, han1,RMS16} to the construction of Hermite-type multiwavelets  \cite{cotronei17, Vonesch}, up to biomedical imaging \cite{romani, uhlmann}.

 In this paper, we study the capability of a Hermite subdivision scheme to
reproduce polynomials in the sense that, for initial data sampled from a polynomial function, the scheme yields the same polynomial and its derivatives in the limit.
The polynomial reproduction guarantees that the scheme satisfies the so called spectral condition that allows factorization of the subdivision operator which in the end leads to convergence results \cite{merrien3, merrien4}.
Moreover, the polynomial reproduction is strictly connected  to the approximation order of the scheme \cite{CYR16}, even in the Hermite case \cite{byeongseon}.

Our study, of purely algebraic nature,  provides algebraic conditions on the subdivision symbol and its derivatives for computing the exact degree of polynomial reproduction and also for determining the associated
correct parametrization. In this respect, it generalizes the work done in \cite{hormann} where the polynomial reproduction of a \emph{scalar} subdivision scheme is considered in full generality.

The case we study here, is the case of a Hermite scheme dealing with function values and first derivatives or  function values, first and second derivatives. Essentially, these are the known existing Hermite schemes. However, our algebraic approach can be extended to a general situation where function values and derivatives of order higher than $2$ are considered.

 The main result of our paper, Theorem \ref{main_result_2,3}, states that for a Hermite scheme associated with a finitely supported matrix mask $\mathcal{A}=\lbrace A_l\in \mathbb{R}^{d \times d}, l \in \mathbb{Z}\rbrace$, $d=2,3$, having the symbol $\textbf{A}(z)=\sum_{l \in \mathbb{Z}} A_l z^l$,
the polynomial reproduction of order $m$ is equivalent to specific properties of the \emph{symbol} and its derivatives evaluated at $1$ and $-1$, i.e., 
	\begin{align*}
	\textbf{A}(-1)\textbf{e}_{1,d}=\textbf{0}_{d},\qquad\textbf{A}^{(k)}(-1) \textbf{e}_{1,d}+\sum_{s=2}^{d}\Big(\sum_{\ell =s-1}^k \alpha^{d}_{k,\ell}\cdot \textbf{A}^{(k-\ell)}(-1) \textbf{e}_{s,d}\Big)=\textbf{0}_{d},\ k=1,\dots ,m,\\
	\textbf{A}(1)\textbf{e}_{1,d}=2\textbf{e}_{1,d} , \qquad \textbf{A}^{(k)}(1) \textbf{e}_{1,d}+\sum_{s=2}^{d}\Big(\sum_{\ell =s-1}^k \tilde{\alpha}^{d}_{k,\ell}\cdot \textbf{A}^{(k-\ell)}(1) \textbf{e}_{s,d}\Big)=\textbf{q}_{d},\ k=1,\dots ,m.
	\end{align*}
with $\alpha^{d}_{k,\ell},\ \tilde{\alpha}^{d}_{k,\ell},\ k=1,\dots ,m$  suitable sets of real coefficients, for $d=2,3$.
In the formula above $ \textbf{e}_{s,d}$ denotes the $s$-th canonical vector of $\mathbb{R}^{d}$, $\textbf{0}_{d}$ is the zero vector of $\mathbb{R}^{d}$ while $\textbf{q}_{d}$ is a $d$-dimensional vector that depends on the parametrization of the scheme. Indeed, the entries of $\textbf{q}_{d}$ are defined by means of the polynomials 
$$\prod_{r=0}^{k-1}(\tau-r),\quad k=1,\dots,m,\qquad \hbox{where \(\tau\) is the parametrization of the scheme}.$$
For $m = 1$, the importance of the previous conditions is that it allows us to identify the correct parametrization that guarantees at least the reproduction of linear polynomials. The parametrization determines the grid points to which the newly computed values are attached at each step of subdivision recursion to ensure the higher degree of polynomial reproduction of a scheme.
The dependence on the parametrization is not surprising since, even in the scalar situation, the parametrization plays an important role to guarantee reproduction properties of a subdivision scheme \cite{hormann}.

 A significant application of the derived algebraic conditions on the subdivision symbols is the construction of new Hermite subdivision schemes with specific reproduction
properties in both the primal and the dual situation. It is worth mentioning that, with this respect, this work is related to the recent paper \cite{byeongseon} where the construction of Hermite subdivision schemes reproducing polynomials is in fact considered also by the help of an algebraic approach.

\medskip This paper is organised as follows: in Section 2  we fix the notation and recall some useful background while in Section 3 we present and analyse some auxiliary polynomials that are crucial for our analysis. The core of the paper is Section 4 where the algebraic conditions for polynomial reproduction are stated and proved for $d=2$. In Section 5 we apply the mentioned algebraic conditions to a known dual existing scheme and for deriving a new primal Hermite scheme. The generalization to the case $d=3$ is then considered in Section 6.

%---------------------------------------------------------------------------------------------------------------------

\section{Notation and background}
To fix the notation used in the paper, we recall that a (univariate) Hermite subdivision operator
\(H_{\mathcal{A}}\),  based on the \emph{matrix mask}  \(\mathcal{A}=\lbrace A_l\in \mathbb{R}^{d \times d}, l \in \mathbb{Z}\rbrace\), $d\geqslant 2$, acts  on a sequence of Hermite data $f_{n}=\{\textbf{f}_{n}(j),\ j\in \mathbb{Z}\}\) as
	\begin{align} \label{subdivision_rule}
	\textbf{D}^{n+1}\textbf{f}_{n+1}(i)=\sum_{j \in \mathbb{Z}} A_{i-2j}\textbf{D}^n\textbf{f}_{n}(j) \quad \forall~ i\in \mathbb{Z},\quad  n\geqslant 0,
	\end{align}
where \( \textbf{D}=diag(1,\ \frac12,\ \dots, \frac{1}{2^d})\).
The Hermite subdivision scheme, still denoted by \(H_{\mathcal{A}}\), is the repeated application of \(H_{\mathcal{A}}\) when starting with an Hermite-type initial vector sequence composed of function and derivative values. 
We associate to \(H_{\mathcal{A}}\) the \emph{matrix symbol} 
$\displaystyle{\textbf{A}(z)=\sum_{l \in \mathbb{Z}} A_l z^l}$
and \emph{sub-symbols} $\displaystyle{\textbf{A}_e(z)=\sum_{l \in \mathbb{Z}} A_{2l} z^{2l}}$, $\displaystyle{\textbf{A}_o(z)=\sum_{l \in \mathbb{Z}} A_{2l+1} z^{2l+1}},$
which are related by the equation
$
\displaystyle{\textbf{A}(z)=\textbf{A}_e(z)+\textbf{A}_o(z)}.
$
Their derivatives are 
	\begin{align*}
		\textbf{A}^{(k)}(z):= \sum_{l \in \mathbb{Z}}\prod_{r=0}^{k-1}(l-r)A_l z^{l-k},
	\end{align*}
	and, respectively,
		\begin{align*}
		\textbf{A}_e^{(k)}(z):= \sum_{l \in \mathbb{Z}} \prod_{r=0}^{k-1}(2l-r)A_{2l} z^{2l-k},\quad \textbf{A}_o^{(k)}(z):= \sum_{l \in \mathbb{Z}}\prod_{r=0}^{k-1}(2l+1-r)A_{2l+1} z^{2l+1-k}.
\end{align*} 

 \smallskip In this paper we are interested in both \emph{primal }and  \emph{dual} Hermite schemes. From a geometric point of view, primal Hermite subdivision schemes are those that at each iteration retain or modify the given vectors and create a \lq new\rq\  vector in between two \lq old\rq\ ones.
Dual schemes, instead, discard all given vectors after creating two new ones in between any pair of them. This fact is algebraically connected with the choice of the parameter values $t_{i}^n,\ i\in \mathbb{Z}$, to which we attach the vectors generated by the Hermite scheme.  More precisely, the primal parametrization is such that $t_{i}^n=\frac{i}{2^n}$ while
the dual one is given by $t_{i}^n=\frac{i-\frac12}{2^n}$. Therefore, we consider in this paper the parametrization \(t_{i}^n=\frac{i+\tau}{2^n}\) which includes primal and dual cases. We simply say that $\tau$ is the \emph{parameterization of the scheme} (see \cite{conti1}, for example).

\smallskip We continue with the notion of \emph{reproduction} for Hermite schemes.
\begin{Def} \label{def_reproduction}
A Hermite subdivision scheme \(H_{\mathcal{A}}\) with parametrization $\tau$ \textit{reproduces a function} \(g \in C^d(\mathbb{R})\) if for any initial vector sequence \(f_{0}=\lbrace \textbf{f}_{0}(j)=\begin{bmatrix} g(j)\\ \vdots\\ g^{(d)}(j)\end{bmatrix}, j \in \mathbb{Z}\rbrace\) the sequence \(f_{n}=\lbrace \textbf{f}_{n}(j),\  j \in \mathbb{Z}\rbrace\) defined by (\ref{subdivision_rule}) is 
$\textbf{f}_{n}(j)=[g((j+\tau)/2^n) \cdots  g^{(d)}((j+\tau)/2^n)]^T$ for all $n \in \mathbb{N}$ and $j \in \mathbb{Z}$.
\end{Def}

%---------------------------------------------------------------------------------------------------------------------
\section{Analysis of auxiliary polynomials}
In this section we study the properties of a special class of polynomials which are involved in the algebraic properties we present. 
In case of a $d$-dimensional Hermite scheme, we need $d$ different classes of polynomials. Having defined the first class, the remaining $d-1$ classes are closely related to the first one. Here we consider the polynomials corresponding to the case $d=2$.

%-------------------------------------------------------

\subsection{Polynomials $q_k$}
We start by defining the polynomials $q_{k}\in \prod_k$, (\(\prod_k\) denotes the set of polynomials up to degree \(k\)), as
	\begin{align}  \label{def_poly_indep_i}
q_{0}(x):=1,\quad
	q_{k}(x):=\prod_{r=0}^{k-1}(2x-r), \quad k> 0.
	\end{align}
Obviously, we can write them in terms of the canonical base of $\prod_k$, so that 
	\begin{align*}
	q_{k}(-x)&=\sum_{n=0}^{k}\gamma_{n}^{k}x^n,\quad \hbox{for some coefficients}\quad \gamma_{n}^{k}\in \mathbb{R}.
	\end{align*}
	The reason why we expand $q_{k}(-x)$ instead of $q_{k}(x)$, will become clear later on.  By definition \(\gamma_{k}^{k}=(-1)^k2^k\), hence \(\gamma_{k}^{k}\neq 0,\ k\geqslant 0\) while \(\gamma_{0}^{k}=0\) for all \(k\geqslant 1\). For each \(i \in \mathbb{Z}\) we define  the polynomials 
	\begin{align} \label{def_polynomial_1}
	q_{0,i}(x):=1, \quad q_{k,i}(x):=q_{k}\Big(x+\frac{i}{2}\Big)=\prod_{r=0}^{k-1}(2x+i-r),\quad k> 0,
	\end{align}
which can also be written in terms of the canonical base as 
	\begin{align} \label{coefficients1}
	q_{k,i}(-x)=\sum_{n=0}^{k}\gamma_{n}^{k,i}x^n,\quad \hbox{for some coefficients}\quad  \gamma_{n}^{k,i}\in \mathbb{R}. 
	\end{align}
Obviously, \(q_{k,0}=q_k\) and \(\gamma_{n}^{k,0}=\gamma_{n}^{k},\ n=0,\dots,k\). In the next lemma we collect some relations between the coefficients of the polynomials \(q_{k,i}\).

\begin{lem} \label{recursion_gamma} Let \(i\in \mathbb{Z}\).
For all \(k\geqslant 1\) the coefficients of the polynomials \(q_{k,i}\) as presented in (\ref{coefficients1}) satisfy 
	\begin{align*}
		\gamma_{0}^{k,i}&=(i-(k-1))\gamma_{0}^{k-1,i}, \\
		\gamma_{n}^{k,i}&=-2\gamma_{n-1}^{k-1,i}+(i-(k-1))\gamma_{n}^{k-1,i} \quad  n=1,\dots ,k-1,\\
		\gamma_{k}^{k,i}&=-2\gamma_{k-1}^{k-1,i}.
	\end{align*}
\end{lem}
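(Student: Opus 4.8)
The plan is to prove the three recursions by directly exploiting the product-form definition (\ref{def_polynomial_1}), namely $q_{k,i}(x)=\prod_{r=0}^{k-1}(2x+i-r)$, which immediately yields the factorization
\[
q_{k,i}(x)=\big(2x+i-(k-1)\big)\,q_{k-1,i}(x).
\]
Substituting $-x$ for $x$ gives $q_{k,i}(-x)=\big(-2x+i-(k-1)\big)\,q_{k-1,i}(-x)$. Now I would insert the expansions from (\ref{coefficients1}): writing $q_{k,i}(-x)=\sum_{n=0}^{k}\gamma_n^{k,i}x^n$ and $q_{k-1,i}(-x)=\sum_{n=0}^{k-1}\gamma_n^{k-1,i}x^n$, the identity becomes
\[
\sum_{n=0}^{k}\gamma_n^{k,i}x^n=\big((i-(k-1))-2x\big)\sum_{n=0}^{k-1}\gamma_n^{k-1,i}x^n
=(i-(k-1))\sum_{n=0}^{k-1}\gamma_n^{k-1,i}x^n-2\sum_{n=0}^{k-1}\gamma_n^{k-1,i}x^{n+1}.
\]
The last step is simply to compare coefficients of $x^n$ on both sides: the constant term ($n=0$) only receives a contribution from the first sum, giving $\gamma_0^{k,i}=(i-(k-1))\gamma_0^{k-1,i}$; for $1\le n\le k-1$ both sums contribute, giving $\gamma_n^{k,i}=(i-(k-1))\gamma_n^{k-1,i}-2\gamma_{n-1}^{k-1,i}$; and the top coefficient $x^k$ comes only from the shifted sum, giving $\gamma_k^{k,i}=-2\gamma_{k-1}^{k-1,i}$. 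This is exactly the claimed system.

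There is essentially no obstacle here — the statement is a bookkeeping identity and the factorization $q_{k,i}=(2x+i-k+1)q_{k-1,i}$ does all the work. The only points requiring a word of care are the boundary cases in the coefficient comparison (that the index ranges line up so that no spurious terms appear, e.g. that $\gamma_k^{k-1,i}$ does not occur because $q_{k-1,i}$ has degree $k-1$), and checking the base case $k=1$ separately, where $q_{1,i}(-x)=-2x+i$ so $\gamma_0^{1,i}=i=(i-0)\gamma_0^{0,i}$ and $\gamma_1^{1,i}=-2=-2\gamma_0^{0,i}$, consistent with the convention $q_{0,i}\equiv 1$. Since the three recursions are proved uniformly for every $i\in\mathbb Z$, the special case $i=0$ recorded after (\ref{coefficients1}) gives the analogous relations for the coefficients $\gamma_n^k$ of $q_k(-x)$ as an immediate corollary, which is presumably how these relations will be used in the sequel.
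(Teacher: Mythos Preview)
Your proof is correct and follows essentially the same approach as the paper: both exploit the factorization $q_{k,i}(x)=(2x+i-(k-1))q_{k-1,i}(x)$ from (\ref{def_polynomial_1}) and then compare coefficients. The only cosmetic difference is that you substitute $-x$ before expanding, whereas the paper expands $q_{k,i}(x)$ directly and carries the $(-1)^n$ factors through; the coefficient comparison is otherwise identical.
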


\begin{proof}
For \(k=1\) the statement is true by definition of the polynomials and its coefficients. For \(k>1\) it follows by (\ref{def_polynomial_1}) that \(q_{k,i}(x)=q_{k-1,i}(x)(2x+i-(k-1))\). Thus using (\ref{coefficients1}) we obtain
	\begin{align*}
	\sum_{n=0}^{k}(-1)^{n}\gamma_{n}^{k,i}x^n&=\sum_{n=0}^{k-1}(-1)^{n}\gamma_{n}^{k-1,i}x^n(2x+i-(k-1)) \\
	&=2\sum_{n=0}^{k-1}(-1)^{n}\gamma_{n}^{k-1,i}x^{n+1}+\sum_{n=0}^{k-1}(-1)^{n}(i-(k-1))\gamma_{n}^{k-1,i}x^{n} \\
	&=2\sum_{n=1}^{k}(-1)^{n-1}\gamma_{n-1}^{k-1,i}x^{n}+\sum_{n=0}^{k-1}(-1)^{n}(i-(k-1))\gamma_{n}^{k-1,i}x^{n} \\
	&=2(-1)^{k-1}\gamma_{k-1}^{k-1,i}x^k+\sum_{n=1}^{k-1}(2(-1)^{n-1}\gamma_{n-1}^{k-1,i}+(-1)^n(i-(k-1))\gamma_{n}^{k-1,i})x^{n}\\  &\quad+ (i-(k-1))\gamma_{0}^{k-1,i}.
	\end{align*}
Comparison of the coefficients proves the lemma.
\end{proof}	

Next, we study the relation of the coefficients of the polynomials \(q_{k,i}\) (which do depend on \(i\in \mathbb{Z}\)) and those of the polynomials \(q_{k}\), defined in (\ref{def_poly_indep_i}) (which do not depend on \(i\in \mathbb{Z}\)).

\begin{lem} \label{lem_foralli}
For \(i\in \mathbb{Z}\) and $k\geqslant 0$, \(\gamma_{n}^{k,i}=\sum_{r=n}^{k}(-1)^{r+n}\gamma_{r}^{k} \binom{r}{n}(\frac{i}{2})^{r-n}\),  \(n=0,\dots,k\). 
Moreover, $\gamma_{k}^{k,i}\neq 0$.
\end{lem}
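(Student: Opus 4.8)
The plan is to work directly from the translation identity $q_{k,i}(x)=q_k\bigl(x+\tfrac{i}{2}\bigr)$ recorded in (\ref{def_polynomial_1}), which after the substitution $x\mapsto -x$ reads $q_{k,i}(-x)=q_k\bigl(\tfrac{i}{2}-x\bigr)$. By the very definition of the coefficients in (\ref{def_poly_indep_i}) we have $q_k(-x)=\sum_{r=0}^{k}\gamma_r^k x^r$, and replacing $x$ by $-x$ once more gives the expansion of $q_k$ itself, namely $q_k(x)=\sum_{r=0}^{k}(-1)^r\gamma_r^k x^r$. I would then evaluate this at $\tfrac{i}{2}-x$ and expand each monomial by the binomial theorem, $\bigl(\tfrac{i}{2}-x\bigr)^r=\sum_{n=0}^{r}\binom{r}{n}(-1)^n\bigl(\tfrac{i}{2}\bigr)^{r-n}x^n$.

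Substituting and interchanging the two finite summations yields
\[
q_{k,i}(-x)=\sum_{r=0}^{k}(-1)^r\gamma_r^k\sum_{n=0}^{r}\binom{r}{n}(-1)^n\left(\tfrac{i}{2}\right)^{r-n}x^n=\sum_{n=0}^{k}\left(\sum_{r=n}^{k}(-1)^{r+n}\gamma_r^k\binom{r}{n}\left(\tfrac{i}{2}\right)^{r-n}\right)x^n .
\]
Comparing this with the expansion $q_{k,i}(-x)=\sum_{n=0}^{k}\gamma_n^{k,i}x^n$ from (\ref{coefficients1}) and using that a polynomial determines its coefficients in the canonical basis, I read off exactly the claimed identity $\gamma_n^{k,i}=\sum_{r=n}^{k}(-1)^{r+n}\gamma_r^k\binom{r}{n}\bigl(\tfrac{i}{2}\bigr)^{r-n}$.

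For the last assertion, the coefficient of $x^k$ on the right-hand side above receives a contribution only from the term $r=n=k$, so $\gamma_k^{k,i}=(-1)^{2k}\gamma_k^k\binom{k}{k}\bigl(\tfrac{i}{2}\bigr)^{0}=\gamma_k^k$, which is $(-1)^k2^k\neq 0$ as already noted after (\ref{def_poly_indep_i}); hence $\gamma_k^{k,i}\neq 0$ for every $i\in\mathbb{Z}$. (Equivalently, iterating the third recursion of Lemma~\ref{recursion_gamma} down to $k=0$ gives $\gamma_k^{k,i}=(-2)^k\gamma_0^{0,i}=(-2)^k$.) I do not expect any real obstacle here: the only delicate point is the index bookkeeping when swapping the order of summation and tracking the two sign factors $(-1)^r$ and $(-1)^n$ so that they merge into $(-1)^{r+n}$. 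An alternative, slightly longer route is an induction on $k$ that feeds the already-established formula for $\gamma_n^{k-1,i}$ into the three recursions of Lemma~\ref{recursion_gamma}, invoking the Pascal identity $\binom{r}{n}=\binom{r-1}{n-1}+\binom{r-1}{n}$ together with the $i=0$ instance of those same recursions; I would nonetheless favour the direct substitution argument sketched above.
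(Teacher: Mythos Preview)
Your proof is correct and follows essentially the same route as the paper: both arguments expand $q_k$ at the shifted argument via the binomial theorem and compare coefficients, the only cosmetic difference being that you substitute $x\mapsto -x$ first (working with $q_k(\tfrac{i}{2}-x)$) whereas the paper expands $q_k(x+\tfrac{i}{2})$ and then implicitly absorbs the extra sign when matching against $q_{k,i}(x)=\sum_n(-1)^n\gamma_n^{k,i}x^n$. The concluding step $\gamma_k^{k,i}=\gamma_k^{k}\neq 0$ is likewise identical.
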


\begin{proof}
Let \(i\in \mathbb{Z}\). We have
\begin{align*}
	q_{k}\Big(x+\frac{i}{2}\Big)&=\sum_{r=0}^{k} \gamma_{r}^{k}\Big(-x-\frac{i}{2}\Big)^r =\sum_{r=0}^{k} (-1)^r\gamma_{r}^{k}\sum_{n=0}^{r} \binom{r}{n}\Big(\frac{i}{2}\Big)^{r-n}x^n\\ &=\sum_{n=0}^{k} \sum_{r=n}^{k}(-1)^r\gamma_{r}^{k}\binom{r}{n}\Big(\frac{i}{2}\Big)^{r-n}x^n.
\end{align*}
By (\ref{coefficients1}) a comparison of the coefficients leads to
	\begin{equation}\label{eq:gammai-gamma}
	\gamma_{n}^{k,i}=\sum_{r=n}^{k}(-1)^{r+n}\gamma_{r}^{k} \binom{r}{n}\Big(\frac{i}{2}\Big)^{r-n} \quad
	 \quad n=0,\dots ,k.
	\end{equation}
Finally, since $\gamma_{k}^{k,i}=\gamma_{k}^{k}$ and $\gamma_{k}^{k}\neq 0$ the proof is complete.
\end{proof}
\subsection{Polynomials $\tilde{q}_k$  and coefficients $\alpha^{1}_{k,\ell}$}

 We define a second class of polynomials which is closely related to the polynomials \(q_{k,i}\). First, we need to introduce the  coefficients \(\alpha^{1}_{k,\ell}\), for \(\ell=1,\dots,k\). They are defined in a recursive way as  

\begin{Def} \label{def_coefficients_c}
Let \(k\in \mathbb{N}\). We define the sequence of coefficients $\{\alpha^{1}_{k,\ell},\ \ell=1,\dots, k\}$
	\begin{align*}
\begin{array}{ll}
	\alpha^{1}_{k,1}&:=2k\\
	\alpha^{1}_{k,k-n+1}&:=(-1)^{k}2^{-n+1}\Big(n\gamma_{n}^{k}-\displaystyle{\sum_{j=1}^{k-n}(-1)^j\alpha^{1}_{k,j}\gamma_{n-1}^{k-j}}\Big) \quad n=k-1,\dots ,1.
\end{array}
	\end{align*}
\end{Def}

\smallskip \noindent In order to compute \(\alpha^{1}_{k,\ell}\) for some \(\ell \in \lbrace 2,...,k\rbrace\), we consider the second formula of Definition \ref{def_coefficients_c} for \(n=k-(\ell-1)\).
We illustrate the computation in an example.

\begin{ex}
Let \(k=3\). 
Computations show that \(\gamma_{0}^{1}=\gamma_{0}^{2}=0\), \(\gamma_{1}^{2}=2\) and \(\gamma_{1}^{3}=-4\), \(\gamma_{2}^{3}=-12\).
By Definition \ref{def_coefficients_c} it follows that
	\begin{align*}
\alpha^{1}_{3,1}=6,\quad	\alpha^{1}_{3,2}=-\frac{1}{2}\Big(2\gamma_{2}^{3}+6\gamma_{1}^{2}\Big)=6, \quad 	\alpha^{1}_{3,3}=-(\gamma_{1}^{3}+\alpha^{1}_{3,1}\gamma_{0}^{2}-\alpha^{1}_{3,2}\gamma_{0}^{1})=4.
	\end{align*}
For more explicit values of the coefficients, see Table \ref{table_c_kl}.
\end{ex}

Numerical computations give the values of the coefficients \(\alpha^{1}_{k,\ell},\ \ell=1,\dots,k\), defined in Definition  \ref{def_coefficients_c},  as shown in Table \ref{table_c_kl}.
Therefore, we conjecture what follows.
\begin{conj} The coefficients \(\alpha^{1}_{k,\ell}\), \(k \in \mathbb{N}\), can be computed directly by the rules
	\begin{align*}
	\alpha^{1}_{k, 1}&:=2k, \\
	\alpha^{1}_{k,\ell}&:=2(\ell+1)(\ell-1)!+(\ell-1)\sum_{i=\ell}^{k-2}\alpha^{1}_{i+1,\ell-1} \quad \ell=2,\dots, k-1, \\
	\alpha^{1}_{k,k}&:=2(k-1)!,
	\end{align*}
where we implicitly assume the convention  \(\sum_{i=\ell}^{k-2}\alpha^{1}_{i+1,\ell-1}=0\), if \(\ell=k-1\). So, \(\alpha^{1}_{k,k-1}=2k(k-2)!\).

	\begin{table} \label{table_c_kl}
	\centering \small
		\begin{tabular}{|l|ccccccc|}
		\hline
   		\diagbox{\(k\)}{\(\ell\)}
     	& 1& 2& 3& 4 & 5  & 6 & 7 \\
  		\hline
	 	1 & 2 & & & & &  &   \\
  		2 & 4 & 2 &  & &  &  &   \\
	    3 & 6 & 6 & 4 &   &  &  & \\
   		4 & 8 & 12 & 16 & 12 &  &  &   \\
  		5 & 10 & 20 & 40 & 60 & 48 &  &  \\
  		6 & 12 & 30 & 80 & 180 & 288 & 240 &   \\
  		7 & 14 & 42 & 140 & 420 & 1008 & 1680 & 1440  \\
  				\hline	
		\end{tabular}
	\caption{Values of the coefficients \(\alpha^{1}_{k,\ell},\ k=1,\dots,7,\ \ell=1,\dots,k\).}
	\label{table_c_kl}
	\end{table}
\end{conj}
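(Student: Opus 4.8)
The plan is to show that the array produced by the recursion in Definition~\ref{def_coefficients_c} and the array produced by the conjectured rules coincide with the single closed form
\[
\alpha^{1}_{k,\ell}=\frac{2\,k!}{\ell\,(k-\ell)!}=2\binom{k}{\ell}(\ell-1)!\,,\qquad 1\leqslant\ell\leqslant k .
\]
A useful preparatory step is to make the coefficients $\gamma^{k}_{n}$ explicit. From $q_{k}(-x)=\prod_{r=0}^{k-1}(-2x-r)=(-1)^{k}\prod_{r=0}^{k-1}(2x+r)$, setting $P_{m}(x):=\prod_{r=0}^{m-1}(x+r)$ (with $P_{0}\equiv1$) and writing $P_{m}(x)=\sum_{n}p_{m,n}x^{n}$, one reads off $\gamma^{k}_{n}=(-1)^{k}2^{n}p_{k,n}$; in particular $\gamma^{n}_{n}=(-1)^{n}2^{n}$.

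First I would recast Definition~\ref{def_coefficients_c} in a symmetric, sign-free form. Multiplying the defining formula for $\alpha^{1}_{k,k-n+1}$ by $(-1)^{k}2^{n-1}$, and observing that the resulting stand-alone term $(-1)^{k}2^{n-1}\alpha^{1}_{k,k-n+1}$ is \emph{exactly} the missing $j=k-n+1$ summand of $\sum_{j}(-1)^{j}\alpha^{1}_{k,j}\gamma^{k-j}_{n-1}$ --- this is where $\gamma^{n-1}_{n-1}=(-1)^{n-1}2^{n-1}$ is used --- one turns Definition~\ref{def_coefficients_c} into the triangular system
\[
n\,\gamma^{k}_{n}=\sum_{j=1}^{k}(-1)^{j}\,\alpha^{1}_{k,j}\,\gamma^{k-j}_{n-1}\,,\qquad n=1,\dots,k,
\]
where the upper limit may be taken to be $k$ since $\gamma^{k-j}_{n-1}=0$ once $j>k-n+1$, and the instance $n=k$ reproduces the base case $\alpha^{1}_{k,1}=2k$. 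Substituting $\gamma^{k}_{n}=(-1)^{k}2^{n}p_{k,n}$ and cancelling the common factor $(-1)^{k}2^{n-1}$, this is precisely the coefficient-wise version of the polynomial identity
\[
\sum_{j=1}^{k}\alpha^{1}_{k,j}\,P_{k-j}(x)=2\,P_{k}'(x),
\]
which, as $\{P_{0},\dots,P_{k-1}\}$ is a basis of the polynomials of degree at most $k-1$, determines $\alpha^{1}_{k,1},\dots,\alpha^{1}_{k,k}$ uniquely.

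Next I would solve this identity. The quickest route is the generating function $\sum_{k\geqslant0}\frac{P_{k}(x)}{k!}t^{k}=(1-t)^{-x}$ (the generalized binomial theorem); differentiating in $x$ gives
\[
\sum_{k\geqslant0}\frac{P_{k}'(x)}{k!}t^{k}=-\log(1-t)\,(1-t)^{-x}=\left(\sum_{j\geqslant1}\frac{t^{j}}{j}\right)\left(\sum_{m\geqslant0}\frac{P_{m}(x)}{m!}t^{m}\right),
\]
and comparing the coefficient of $t^{k}$ yields $P_{k}'(x)=\sum_{j=1}^{k}\frac{k!}{j\,(k-j)!}\,P_{k-j}(x)$ (alternatively one proves this by induction on $k$, using $P_{k}=(x+k-1)P_{k-1}$ and $xP_{m}=P_{m+1}-mP_{m}$). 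Comparing with the identity of the previous paragraph and invoking uniqueness of the expansion in the basis $\{P_{m}\}$, we get $\alpha^{1}_{k,\ell}=\frac{2\,k!}{\ell\,(k-\ell)!}$.

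Finally I would check that this closed form satisfies the conjectured rules, and note that those rules form a well-founded recursion ($\alpha^{1}_{\cdot,1}$ and $\alpha^{1}_{k,k}$ are prescribed, and each $\alpha^{1}_{k,\ell}$ with $2\leqslant\ell\leqslant k-1$ is determined by the $\alpha^{1}_{m,\ell-1}$, $\ell+1\leqslant m\leqslant k-1$), hence admit a unique solution; therefore the two arrays coincide. The boundary rules $\alpha^{1}_{k,1}=2k$ and $\alpha^{1}_{k,k}=2(k-1)!$ are immediate. For $2\leqslant\ell\leqslant k-1$, after cancelling the factor $\ell-1$ and substituting $m=i+1$, the middle rule is equivalent to
\[
\frac{2\,k!}{\ell\,(k-\ell)!}=\frac{2\,(\ell+1)!}{\ell}+\sum_{m=\ell+1}^{k-1}\frac{2\,m!}{(m-\ell+1)!},
\]
which follows by telescoping: with $g(m):=\frac{m!}{(m-\ell)!}$ one has $\frac{m!}{(m-\ell+1)!}=\frac{1}{\ell}\left(g(m+1)-g(m)\right)$, so the sum equals $\frac{1}{\ell}\left(g(k)-g(\ell+1)\right)=\frac{1}{\ell}\left(\frac{k!}{(k-\ell)!}-(\ell+1)!\right)$, and the two copies of $\frac{2\,(\ell+1)!}{\ell}$ cancel. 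The only genuinely delicate step is the reformulation in the second paragraph --- recognising Definition~\ref{def_coefficients_c} as the disguised identity $\sum_{j}\alpha^{1}_{k,j}P_{k-j}(x)=2P_{k}'(x)$, with the isolated coefficient $\alpha^{1}_{k,k-n+1}$ serving as the summand that completes the sum; once the bookkeeping of signs and powers of $2$ is done there, the remaining steps are routine.
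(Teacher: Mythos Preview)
The paper does \emph{not} prove this statement: it is explicitly presented as a conjecture, supported only by the numerical values in Table~\ref{table_c_kl}. Your proposal therefore goes strictly beyond the paper by actually settling the conjecture.

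Your argument is correct. A few remarks on how it relates to material already in the paper. The key reformulation in your second paragraph,
\[
n\,\gamma^{k}_{n}=\sum_{j=1}^{k-n+1}(-1)^{j}\alpha^{1}_{k,j}\,\gamma^{k-j}_{n-1}\qquad(n=1,\dots,k),
\]
is precisely the $i=0$ case of what the paper establishes inside the proof of Proposition~\ref{relation_coefficients}; you could simply cite that proof rather than redo the sign bookkeeping. What is genuinely new in your argument is (i) recognising this as the basis expansion $2P_{k}'(x)=\sum_{j}\alpha^{1}_{k,j}P_{k-j}(x)$ in the rising-factorial basis, (ii) solving it via the generating function $(1-t)^{-x}$ to obtain the closed form $\alpha^{1}_{k,\ell}=\dfrac{2\,k!}{\ell\,(k-\ell)!}=2\binom{k}{\ell}(\ell-1)!$, and (iii) the telescoping verification that this closed form satisfies the conjectured rules. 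All three steps check out; in particular the telescope with $g(m)=m!/(m-\ell)!$ is clean, and your observation that the conjectured rules constitute a well-founded recursion (induction on $\ell$, with the boundary cases $\ell=1$ and $\ell=k$ prescribed) is what justifies concluding that the two arrays coincide.

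In short: there is nothing to compare against in the paper, and your proof stands on its own. The closed form you obtain is itself a sharper statement than the conjecture and would be worth recording explicitly.
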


\smallskip \noindent Based on the previous set of coefficients, for $i \in \mathbb{Z}$ and  $k\geqslant 0$ we define the polynomials $\tilde{q}_{k,i}\in \prod_{k-1}$ 
	\begin{align} \label{def_polynomial_2}
\tilde{q}_{0,i}:=0,\quad 
	\tilde{q}_{k,i}(x):=\sum_{n=1}^{k}(-1)^{n}\alpha^{1}_{k,n}q_{k-n,i}(x),\quad k> 0,\quad  i \in \mathbb{Z}.
\end{align}
As done before, we can write them in the form
	\begin{align}
	\tilde{q}_{k,i}(-x)&=\sum_{n=0}^{k-1}\tilde{\gamma}_{n}^{k,i}x^n,\quad \hbox{for some coefficients}\quad \tilde{\gamma}_{n}^{k,i}\in \mathbb{R}. \label{coefficients2}
	\end{align}

\begin{lem} \label{relation_coefficients2}
For \(k\in \mathbb{N}\)  and \(i \in \mathbb{Z}\) we have
\(\tilde{\gamma}_{k-1}^{k,i}=k\gamma_{k}^{k,i}\neq0\) and \(\tilde{\gamma}_{n}^{k,i}=\sum_{j=1}^{k-n}(-1)^{j}\alpha^{1}_{k,j}\gamma_{n}^{k-j,i}\) for \(n=0,\dots, k-1.\)
\end{lem}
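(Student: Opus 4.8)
The plan is to expand the defining relation (\ref{def_polynomial_2}) of $\tilde q_{k,i}$ directly and read off its coefficients by comparison with (\ref{coefficients2}). Replacing $x$ by $-x$ in (\ref{def_polynomial_2}) and inserting, for each $n$, the expansion $q_{k-n,i}(-x)=\sum_{m=0}^{k-n}\gamma_m^{k-n,i}x^m$ from (\ref{coefficients1}) yields
\[
\tilde q_{k,i}(-x)=\sum_{n=1}^{k}(-1)^n\alpha^1_{k,n}\sum_{m=0}^{k-n}\gamma_m^{k-n,i}x^m ,
\]
a double sum in which the inner polynomial $q_{k-n,i}$ contributes to the monomial $x^p$ only when $p\le k-n$, i.e. $n\le k-p$. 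Collecting the coefficient of $x^p$ for $p=0,\dots,k-1$ therefore gives
\[
\tilde\gamma^{k,i}_p=\sum_{n=1}^{k-p}(-1)^n\alpha^1_{k,n}\,\gamma^{k-n,i}_p ,
\]
which, after renaming $n\mapsto j$ and $p\mapsto n$, is precisely the second claimed identity. A small point worth checking is the extreme term $n=k$: here $q_{0,i}\equiv1$ contributes $(-1)^k\alpha^1_{k,k}$ to the constant coefficient $\tilde\gamma^{k,i}_0$, consistent with letting $n$ run up to $k-p=k$ when $p=0$.

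For the top coefficient $p=k-1$ the sum above collapses to the single term $n=1$, so
\[
\tilde\gamma^{k,i}_{k-1}=-\alpha^1_{k,1}\,\gamma^{k-1,i}_{k-1}=-2k\,\gamma^{k-1,i}_{k-1},
\]
using $\alpha^1_{k,1}=2k$ from Definition \ref{def_coefficients_c}. The third relation of Lemma \ref{recursion_gamma} reads $\gamma^{k,i}_{k}=-2\gamma^{k-1,i}_{k-1}$, so the right-hand side equals $k\,\gamma^{k,i}_k$; hence $\tilde\gamma^{k,i}_{k-1}=k\gamma^{k,i}_k$. Finally $\gamma^{k,i}_k\ne0$ by Lemma \ref{lem_foralli} and $k\ge1$, so $\tilde\gamma^{k,i}_{k-1}\ne0$, which in particular confirms that $\tilde q_{k,i}$ has exact degree $k-1$.

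There is no genuinely hard step: the whole argument is a comparison of coefficients. The only thing requiring care is the bookkeeping of the summation ranges in the double sum, namely transporting the degree constraint $m\le k-n$ on the inner index into the bound $n\le k-p$ on the outer one, and handling the boundary cases $p=0$ (where $n$ reaches $k$ and $q_{0,i}$ is the constant $1$) and $p=k-1$ (where only $n=1$ survives). Once this is in place, both assertions follow at once from Definition \ref{def_coefficients_c} together with Lemmas \ref{recursion_gamma} and \ref{lem_foralli}.
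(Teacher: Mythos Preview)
Your proof is correct and follows essentially the same approach as the paper: both expand $\tilde q_{k,i}(-x)$ via (\ref{def_polynomial_2}) and (\ref{coefficients1}), swap the order of summation to read off the coefficients $\tilde\gamma_n^{k,i}$, and then use $\alpha^1_{k,1}=2k$ together with Lemma~\ref{recursion_gamma} and Lemma~\ref{lem_foralli} for the leading-coefficient claim. The only cosmetic difference is that the paper handles the top coefficient first and the general formula second, whereas you do the reverse.
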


\begin{proof}
By definition of the polynomial \(\tilde{q}_{k,i}\) in (\ref{def_polynomial_2}) and its representation in (\ref{coefficients2}), we obtain
\(\tilde{q}_{k,i}(-x)=\sum_{n=1}^{k}(-1)^n\alpha^{1}_{k,n}q_{k-n,i}(-x)\). Thus the coefficient \(\tilde{\gamma}_{k-1}^{k,i}\), which belongs to the \(x^{k-1}\) term, is given by \(\tilde{\gamma}_{k-1}^{k,i}=-\alpha^{1}_{k,1}\gamma_{k-1}^{k-1,i}=k\gamma_{k}^{k,i}\). Here the last equality follows by Lemma \ref{recursion_gamma}. By Lemma \ref{lem_foralli} \(\gamma_{k}^{k,i}\neq 0\). We continue by computing
	\begin{align*}
	\tilde{q}_{k,i}(-x)=\sum_{n=1}^{k}(-1)^{n}\alpha^{1}_{k,n} \sum_{j=0}^{k-n}\gamma_{j}^{k-n,i}x^j=\sum_{j=0}^{k-1}\Big( \sum_{n=1}^{k-j}(-1)^{n}\alpha^{1}_{k,n}\gamma_{j}^{k-n,i}\Big)x^j.
	\end{align*}
This proves the second part of the lemma.
\end{proof}
Next, we compare the coefficients of the polynomials ${q}_{k,i}$ and $\tilde{q}_{k,i}$ defined in (\ref{def_polynomial_1}) and (\ref{def_polynomial_2}), respectively.

\begin{prop} \label{relation_coefficients}
The coefficients of the polynomials \(q_{k,i}(x)\) and \(\tilde{q}_{k,i}(x)\) satisfy the relation \(\gamma_{n}^{k,i}=\frac{1}{n}\tilde{\gamma}_{n-1}^{k,i}\) for all \(k \in \mathbb{N}\), \(i \in \mathbb{Z}\) and \(n=1,\dots ,k\).
\end{prop}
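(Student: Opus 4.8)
The plan is to recognise that the claimed coefficient identity is nothing but the statement that $\tilde{q}_{k,i}$ equals, up to sign, the derivative of $q_{k,i}$, and that this identity --- because of the shift structure $q_{k,i}(x)=q_{k}(x+\tfrac{i}{2})$ --- reduces to the single case $i=0$, where it turns out to be essentially encoded in the recursive Definition \ref{def_coefficients_c} of the coefficients $\alpha^{1}_{k,\ell}$.

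First I would differentiate the identity $q_{k,i}(-x)=\sum_{n=0}^{k}\gamma_{n}^{k,i}x^{n}$ with respect to $x$: this gives $-q_{k,i}'(-x)=\sum_{n=1}^{k}n\,\gamma_{n}^{k,i}x^{n-1}$, and comparing with the expansion $\tilde{q}_{k,i}(-x)=\sum_{n=0}^{k-1}\tilde{\gamma}_{n}^{k,i}x^{n}$ from (\ref{coefficients2}) shows that the assertion $\gamma_{n}^{k,i}=\tfrac{1}{n}\tilde{\gamma}_{n-1}^{k,i}$, $n=1,\dots,k$, is equivalent to the polynomial identity $\tilde{q}_{k,i}(x)=-q_{k,i}'(x)$. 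Next I would use that, by (\ref{def_polynomial_1}), $q_{k-n,i}(x)=q_{k-n}(x+\tfrac{i}{2})$, so that the definition (\ref{def_polynomial_2}) gives $\tilde{q}_{k,i}(x)=\tilde{q}_{k,0}(x+\tfrac{i}{2})$, while $q_{k,i}(x)=q_{k}(x+\tfrac{i}{2})$ gives $q_{k,i}'(x)=q_{k}'(x+\tfrac{i}{2})$. Hence it suffices to prove the identity for $i=0$, that is, $n\,\gamma_{n}^{k}=\tilde{\gamma}_{n-1}^{k,0}$ for $n=1,\dots,k$.

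For $n=k$ this is exactly the first statement of Lemma \ref{relation_coefficients2} (taken with $i=0$). For $1\leqslant n\leqslant k-1$ I would invoke Lemma \ref{relation_coefficients2} once more to write $\tilde{\gamma}_{n-1}^{k,0}=\sum_{j=1}^{k-n+1}(-1)^{j}\alpha^{1}_{k,j}\gamma_{n-1}^{k-j}$, and then peel off the top term $j=k-n+1$; for that term $k-j=n-1$, so $\gamma_{n-1}^{k-j}=\gamma_{n-1}^{n-1}=(-1)^{n-1}2^{n-1}$ and one obtains $\tilde{\gamma}_{n-1}^{k,0}=(-1)^{k}2^{n-1}\alpha^{1}_{k,k-n+1}+\sum_{j=1}^{k-n}(-1)^{j}\alpha^{1}_{k,j}\gamma_{n-1}^{k-j}$. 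Substituting here the recursive formula of Definition \ref{def_coefficients_c} for $\alpha^{1}_{k,k-n+1}$ (whose internal index, also called $n$, coincides with the present $n$), the prefactors $(-1)^{k}2^{n-1}$ and $(-1)^{k}2^{-n+1}$ multiply to $1$, the first term becomes $n\,\gamma_{n}^{k}-\sum_{j=1}^{k-n}(-1)^{j}\alpha^{1}_{k,j}\gamma_{n-1}^{k-j}$, the two sums cancel, and only $n\,\gamma_{n}^{k}$ survives, which is the claim. The step I expect to be the only delicate one is precisely this sign-and-index bookkeeping; there is no analytic obstacle, but a frontal attack --- expanding everything in powers of $\tfrac{i}{2}$ via Lemma \ref{lem_foralli} --- would be considerably messier, which is why the reformulation as a derivative and the reduction to $i=0$ through the shift are the crucial moves.
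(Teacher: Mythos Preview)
Your proof is correct, and the case $i=0$ is handled in essentially the same way as the paper: both rewrite $\tilde{\gamma}_{n-1}^{k,0}$ via Lemma~\ref{relation_coefficients2} and then feed in the recursive Definition~\ref{def_coefficients_c} for $\alpha^{1}_{k,k-n+1}$ so that the tail sum cancels and only $n\gamma_{n}^{k}$ remains.

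The genuine difference is in the passage from $i=0$ to general $i$. The paper stays at the coefficient level: having established $r\gamma_{r}^{k}=\sum_{j=1}^{k-r+1}(-1)^{j}\alpha^{1}_{k,j}\gamma_{r-1}^{k-j}$ for every $r\in\{n,\dots,k\}$, it multiplies by $(-1)^{r+n}\binom{r}{n}(i/2)^{r-n}$, sums over $r$, swaps the order of summation, and then recognises each side via the binomial formula of Lemma~\ref{lem_foralli}. Your route bypasses this combinatorics entirely by observing that the claimed coefficient identity is precisely $\tilde{q}_{k,i}=-q_{k,i}'$, and that both sides are obtained from the $i=0$ case by the same shift $x\mapsto x+\tfrac{i}{2}$. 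This is cleaner and more conceptual; the paper's computation, on the other hand, never leaves the coefficient representation and so fits seamlessly with the rest of the section, where $\gamma_{n}^{k,i}$ and $\tilde{\gamma}_{n}^{k,i}$ are the primary objects.
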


\begin{proof}
By Lemma \ref{relation_coefficients2} the claim of this lemma is equivalent to
	\begin{align} \label{Equation1}
	\gamma_{n}^{k,i}=\frac{1}{n}\sum_{j=1}^{k-n+1}(-1)^{j}\alpha^{1}_{k,j}\gamma_{n-1}^{k-j,i} \quad \text{for}~ i \in \mathbb{Z}~ \text{and}~ n=1,\dots, k.
	\end{align}
First, let \(n=k\). Using Lemma \ref{recursion_gamma} and the fact that \(\alpha^{1}_{k,1}=2k\) we obtain
	\begin{align*}
	\gamma_{k}^{k,i}=-2\gamma_{k-1}^{k-1,i}=-\frac{1}{k}\alpha^{1}_{k,1}\gamma_{k-1}^{k-1,i}.
	\end{align*}
This proves (\ref{Equation1}) for \(n=k\) and all \(i \in \mathbb{Z}\). Now, let \(n \in \lbrace 1,\dots,k-1 \rbrace\). By Definition \ref{def_coefficients_c} we have
	\begin{align*}
	\alpha^{1}_{k,k-n+1}=(-1)^{k}2^{-n+1}\Big(n\gamma_{n}^{k}-\sum_{j=1}^{k-n}(-1)^j\alpha^{1}_{k,j}\gamma_{n-1}^{k-j}\Big).
	\end{align*}
Using the fact that \(\gamma_{n-1}^{n-1}=(-1)^{n-1}2^{n-1}\) leads to
	\begin{align*}
	n\gamma_{n}^{k}&=(-1)^{k}2^{n-1}\alpha^{1}_{k,k-n+1}+\sum_{j=1}^{k-n}(-1)^j\alpha^{1}_{k,j}\gamma_{n-1}^{k-j}\\
	&=(-1)^{k-n+1}\alpha^{1}_{k,k-n+1}\gamma_{n-1}^{n-1}+\sum_{j=1}^{k-n}(-1)^j\alpha^{1}_{k,j}\gamma_{n-1}^{k-j}\\
	&=\sum_{j=1}^{k-n+1}(-1)^j\alpha^{1}_{k,j}\gamma_{n-1}^{k-j}.
	\end{align*}
Next, we show that this implies that (\ref{Equation1}) is true for any \(i \in \mathbb{Z}\). Let \(n \in \lbrace 1,\dots,k\rbrace\). From above for any \(r \in \lbrace n,...,k \rbrace\) we just saw that
	\begin{align*}
	r\gamma_{r}^{k}&=\sum_{j=1}^{k-r+1}(-1)^j\alpha^{1}_{k,j}\gamma_{r-1}^{k-j}.
	\end{align*}
Since $r\binom{r-1}{n-1}=n\binom{r}{n}$, the latter implies that $n\binom{r}{n}\gamma_{r}^{k}=\displaystyle{\sum_{j=1}^{k-r+1}(-1)^j\alpha^{1}_{k,j}\binom{r-1}{n-1}\gamma_{r-1}^{k-j}}$.
Multiplying by the term \((-1)^{n+r}\Big(\frac{i}{2}\Big)^{r-n}\) on both sides and summing up for \(r\) from $n$ to \(k\) leads to
	\begin{align*}
	n\sum_{r=n}^{k}(-1)^{r+n}\gamma_{r}^{k}\binom{r}{n}\Big(\frac{i}{2}\Big)^{r-n}&=\sum_{r=n}^{k}(-1)^{r+n}\Big( \sum_{j=1}^{k-r+1}(-1)^j\alpha^{1}_{k,j}\binom{r-1}{n-1}\gamma_{r-1}^{k-j}\Big(\frac{i}{2}\Big)^{r-n}\Big)\\
	&=\sum_{r=n-1}^{k-1}\Big( \sum_{j=1}^{k-r}(-1)^{r+n-1}(-1)^j\alpha^{1}_{k,j}\binom{r}{n-1}\gamma_{r}^{k-j}\Big(\frac{i}{2}\Big)^{r-(n-1)}\Big)\\
	&=\sum_{j=1}^{k-n+1}(-1)^j\alpha^{1}_{k,j}\Big( \sum_{r=n-1}^{k-j}(-1)^{r+n-1}\binom{r}{n-1}\gamma_{r}^{k-j}\Big(\frac{i}{2}\Big)^{r-(n-1)}\Big).
	\end{align*}
By \eqref{eq:gammai-gamma}
this implies that $\displaystyle{\gamma_{n}^{k,i}=\sum_{j=1}^{k-n+1}(-1)^{j}\alpha^{1}_{k,j}\gamma_{n-1}^{k-j,i}},$
	which concludes the proof.
\end{proof}

%---------------------------------------------------------------------------------
\section{Algebraic conditions for polynomial reproduction of a Hermite scheme of order \(d=2\) }

In this section we give algebraic conditions on the mask coefficients of a Hermite subdivision scheme of order \(d=2\) which ensures polynomial reproduction up to degree \(m\). 
The main result is stated below. Its proof is split into several Lemmata.

\begin{thm} \label{main_result_new}
Let \(H_{\mathcal{A}}\) be a Hermite subdivision scheme with parametrization \(\tau\). Then \(H_{\mathcal{A}}\) reproduces constants if and only if
	\begin{align}
	\textbf{A}(-1)\textbf{e}_{1,2}&=\textbf{0}_{2}, \label{condition1_k=0_new} \\
\textbf{A}(1)\textbf{e}_{1,2}&=2\textbf{e}_{1,2}. \label{condition2_k=0_new}
	\end{align}
Moreover,  \(H_{\mathcal{A}}\) reproduces polynomials up to degree \(m\geqslant 1\) if and only if it reproduces constants and 
	\begin{align}
	\textbf{A}^{(k)}(-1) \textbf{e}_{1,2}+\sum_{\ell =1}^k \alpha^{1}_{k,\ell}\cdot \textbf{A}^{(k-\ell)}(-1) \textbf{e}_{2,2}&=\textbf{0}_{2}, \label{condition1_new}\\
\textbf{A}^{(k)}(1) \textbf{e}_{1,2}+\sum_{\ell =1}^k \tilde{\alpha}^{1}_{k,\ell} \cdot \textbf{A}^{(k-\ell)}(1)  \textbf{e}_{2,2}&= 
\begin{bmatrix} 2q_{k,2\tau}(-\frac{\tau}{2}) \\\tilde{q}_{k,2\tau}(-\frac{\tau}{2}) \end{bmatrix}\label{condition2_new}
	\end{align}
for all \(k=1,\dots ,m\) with \(\tilde{\alpha}^{1}_{k,\ell}=(-1)^{\ell}\alpha^{1}_{k,\ell},\ \ell=1,\dots,k\), and $\alpha^{1}_{k,\ell}$ as in Definition \ref{def_coefficients_c}.
\end{thm}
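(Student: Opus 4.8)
The plan is to translate the reproduction requirement of Definition~\ref{def_reproduction} into the symbol language, exploiting the structure of the subdivision rule \eqref{subdivision_rule} together with the auxiliary polynomials $q_{k,i}$ and $\tilde q_{k,i}$ built in Section~3. Concretely, I would fix the monomial test functions $g(x)=x^k$ for $k=0,1,\dots,m$ and demand that the scheme maps the sampled Hermite data $\textbf{f}_0(j)=[\,j^k,\ k j^{k-1}\,]^T$ to $\textbf{f}_n(j)=[\,((j+\tau)/2^n)^k,\ k((j+\tau)/2^n)^{k-1}\,]^T$ for every $n$. By linearity and a standard induction on $n$, it suffices to check the single step $n=0\to n=1$; and by shift-invariance of the mask one may further reduce to the two grid points $i=0$ (the ``even'' point, which under the primal/dual parametrization sees $\textbf{A}_e$ and $\textbf{A}(1)$) and $i=1$ (the ``odd'' point, which sees $\textbf{A}_o$ and $\textbf{A}(-1)$). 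This is the point of expanding $q_k(-x)$ rather than $q_k(x)$: the sign $z=-1$ in the odd sub-symbol gets absorbed into the reflected polynomial.

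\textbf{Key steps.} First I would record, for $g(x)=x^k$, the identity $\textbf{D}\,\textbf{f}_1(i)=\sum_{j}A_{i-2j}\textbf{f}_0(j)$ and evaluate the right-hand side. Writing $A_{i-2j}$ in terms of the sub-symbols and using $\sum_\ell \ell(\ell-1)\cdots(\ell-k+1)A_\ell z^{\ell-k}=\textbf{A}^{(k)}(z)$, the sums $\sum_j A_{i-2j}j^k$ and $\sum_j A_{i-2j}\,k j^{k-1}$ become finite linear combinations of $\textbf{A}^{(p)}(\pm1)$ with the binomial/Stirling-type weights that are exactly the coefficients $\gamma_n^{k,i}$ of $q_{k,i}$ (first component) and, after Proposition~\ref{relation_coefficients}, the coefficients $\tilde\gamma_n^{k,i}$ of $\tilde q_{k,i}$ (second component, because of the extra factor $k$ and the $\tfrac12$ in $\textbf{D}$). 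Comparing with the desired left-hand side $\textbf{D}\,\textbf{f}_1(i)$ — whose entries are $\big((i+\tau)/2\big)^k=q_{k,?}(\cdot)$-type expressions evaluated at the shift $\tau$ — yields precisely \eqref{condition1_new}–\eqref{condition2_new} with $\alpha^1_{k,\ell}$ on the $-1$ side and $\tilde\alpha^1_{k,\ell}=(-1)^\ell\alpha^1_{k,\ell}$ on the $+1$ side; the discrepancy vector $[\,2q_{k,2\tau}(-\tfrac\tau2),\ \tilde q_{k,2\tau}(-\tfrac\tau2)\,]^T$ is what remains from the non-homogeneous target at the even point. The constant case $k=0$ is the degenerate instance and gives \eqref{condition1_k=0_new}–\eqref{condition2_k=0_new} directly. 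For the converse I would run the induction: assuming \eqref{condition1_k=0_new}–\eqref{condition2_new} hold for all $k\le m$, the one-step computation above shows $\textbf{f}_1$ has the required sampled form, and the scaling-by-$2$ structure of the parametrization $t_i^n=(i+\tau)/2^n$ lets the induction hypothesis be reapplied at level $n+1$ with the rescaled polynomial.

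\textbf{Main obstacle.} The genuinely delicate part is the bookkeeping that identifies the combinatorial weights appearing in $\sum_j A_{i-2j}\,j^{k}$ with the coefficients $\gamma_n^{k,i}$ and $\tilde\gamma_n^{k,i}$, i.e.\ showing that the ad hoc recursive Definition~\ref{def_coefficients_c} of $\alpha^1_{k,\ell}$ is exactly what makes the second-component equation collapse to the stated form. This is where Lemmas~\ref{recursion_gamma}, \ref{lem_foralli} and Proposition~\ref{relation_coefficients} do the heavy lifting: the relation $\gamma_n^{k,i}=\tfrac1n\tilde\gamma_{n-1}^{k,i}$ is precisely the bridge between the function-value row and the derivative row of the Hermite data, and the factor-$n$ discrepancy is absorbed by the entry $\tfrac12$ in $\textbf{D}$ together with the chain-rule factor $k$ in $g'(x)=kx^{k-1}$. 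I would therefore isolate this identification as a separate lemma, prove it by matching coefficients of $x^n$ after substituting \eqref{eq:gammai-gamma}, and then assemble the theorem from it. The remaining steps — linearity, shift-invariance, and the induction on $n$ — are routine once the one-step identity is in place.
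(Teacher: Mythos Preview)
Your proposal is essentially the paper's own argument: reduce to the one-step rule, split into even/odd via the sub-symbols (the paper's Lemma~\ref{lemma1_new}), pass from monomials to the falling-factorial basis $q_{k,i}$ with Proposition~\ref{relation_coefficients} bridging the two Hermite rows, and run an induction on the degree $m$ (the paper packages these last two steps as Propositions~\ref{corollary1_new} and~\ref{prop_reproduction_new}, introducing the vector polynomial $Q_{k,i}=[q_{k,i},\tilde q_{k,i}]^T$ as an intermediate object). One cosmetic caveat: your shorthand ``$i=0$ sees $\textbf{A}(1)$, $i=1$ sees $\textbf{A}(-1)$'' is not literally how it works---each parity class sees one \emph{sub}-symbol $\textbf{A}_e$ or $\textbf{A}_o$, and it is the pair of parity equations together that recombines into the $\textbf{A}(\pm1)$ conditions, which is precisely the content of Lemma~\ref{lemma1_new}.
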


We prove the first part of Theorem \ref{main_result_new} directly by presenting it as a separated Lemma. First some  important observations are made.
\begin{Rem}  It is worthwhile to remark that:
\begin{itemize}
\item[1)] Up to the reproduction of linear polynomials the algebraic conditions given in the theorem above are also given in \cite{dyn2}, though presented in a different way;
\item[2)] When $m=1$ the previous result allows us to identify the correct parametrization  corresponding to the choice $\tau=(\textbf{A}^{(1)}(1))_{11}$;
\item[3)] The entries of the right-hand side (\ref{condition2_new}) are $$2q_{k,2\tau}(-\frac{\tau}{2})=2\displaystyle{\prod_{r=0}^{k-1}(\tau-r)},\quad \tilde{q}_{k,2\tau}(-\frac{\tau}{2})=\sum_{n=1}^k(-1)^n\alpha_{k,n}^1\prod_{r=0}^{k-n-1}(\tau-r).$$
\end{itemize}
\end{Rem}

\begin{lem} \label{lemma_reproduction_constants_new}
A Hermite subdivision scheme \(H_{\mathcal{A}}\) reproduces constants if and only if (\ref{condition1_k=0_new}) and (\ref{condition2_k=0_new}) are satisfied.
\end{lem}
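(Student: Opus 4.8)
The plan is to translate the reproduction condition for constants directly from Definition~\ref{def_reproduction} into a statement about the symbol evaluated at $\pm 1$. The key observation is that reproduction of constants means that, starting from the initial Hermite data $\textbf{f}_0(j)=[1,0]^T$ for all $j\in\mathbb Z$ (the sampling of $g\equiv 1$ and its derivative), the subdivision rule \eqref{subdivision_rule} must produce $\textbf{f}_n(j)=[1,0]^T$ for all $n$ and $j$, i.e. $\textbf D^n\textbf f_n(j)=\textbf D^n\textbf e_{1,2}$ on every level.

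First I would show the ``if'' direction. Assume \eqref{condition1_k=0_new} and \eqref{condition2_k=0_new}. It suffices, by induction on $n$, to check that one step of \eqref{subdivision_rule} maps the sequence with $\textbf D^n\textbf f_n(j)=\textbf D^n\textbf e_{1,2}$ constant in $j$ to the sequence with $\textbf D^{n+1}\textbf f_{n+1}(i)=\textbf D^{n+1}\textbf e_{1,2}$ constant in $i$. Plugging the constant sequence into the right-hand side of \eqref{subdivision_rule} gives $\sum_{j}A_{i-2j}\,\textbf D^n\textbf e_{1,2}=\big(\sum_{j}A_{i-2j}\big)\textbf D^n\textbf e_{1,2}$; the inner sum is $\sum_{l\equiv i\,(2)}A_l$, which is the even or odd sub-symbol evaluated at $1$ depending on the parity of $i$. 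Since $\textbf A_e(1)+\textbf A_o(1)=\textbf A(1)$ and $\textbf A_e(1)-\textbf A_o(1)=\textbf A(-1)$, the two conditions \eqref{condition1_k=0_new}--\eqref{condition2_k=0_new} are exactly equivalent to $\textbf A_e(1)\textbf e_{1,2}=\textbf A_o(1)\textbf e_{1,2}=\textbf e_{1,2}$. Hence for every $i$ the new value is $\textbf e_{1,2}$, and multiplying through by the relation $\textbf D^{n+1}\textbf e_{1,2}$ versus $\textbf D^n\textbf e_{1,2}$ requires a small check: actually the clean way is to note that $\textbf e_{1,2}$ is fixed and the scaling $\textbf D^n$ only rescales the second component, which is zero throughout, so the identity $\textbf D^{n+1}\textbf f_{n+1}(i)=A(\cdot)\textbf D^n\textbf f_n(j)$-summed collapses to $\textbf D^{n+1}\textbf e_{1,2}$ on both sides after the parity split. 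This establishes reproduction of $g\equiv1$, hence of all constants by linearity.

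For the ``only if'' direction I would run the argument in reverse: if $H_{\mathcal A}$ reproduces constants, then in particular the first subdivision step applied to the constant data $[1,0]^T$ must return $[1,0]^T$ everywhere. Splitting into even indices $i=2p$ and odd indices $i=2p+1$ yields $\big(\sum_j A_{2p-2j}\big)\textbf e_{1,2}=\textbf e_{1,2}$ and $\big(\sum_j A_{2p+1-2j}\big)\textbf e_{1,2}=\textbf e_{1,2}$, i.e. $\textbf A_e(1)\textbf e_{1,2}=\textbf e_{1,2}$ and $\textbf A_o(1)\textbf e_{1,2}=\textbf e_{1,2}$ (using that $\textbf D$ is invertible so the $\textbf D$-factors on both sides cancel). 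Adding and subtracting these two identities gives back \eqref{condition2_k=0_new} and \eqref{condition1_k=0_new} respectively.

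I expect the only real subtlety — not a deep obstacle, but the place where care is needed — to be the bookkeeping of the diagonal matrices $\textbf D^n$ on the two sides of \eqref{subdivision_rule}: one must confirm that the level-dependent scaling is consistent with a constant solution, which works precisely because the derivative components of a constant vanish, so $\textbf D^n$ acts trivially on the surviving data. A secondary point to state carefully is that it suffices to test on the single function $g\equiv1$ (equivalently, a single constant), since reproduction for all constants follows by linearity of $H_{\mathcal A}$ and the fact that constants and their derivatives are sampled as scalar multiples of $\textbf e_{1,2}$. Everything else is the elementary even/odd decomposition of the symbol, which is standard.
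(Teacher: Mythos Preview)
Your proposal is correct and follows essentially the same route as the paper: reduce reproduction of constants to the identity $\sum_{j}A_{i-2j}\textbf{e}_{1,2}=\textbf{e}_{1,2}$ for all $i$, then use the even/odd decomposition $\textbf{A}_e(1)\pm\textbf{A}_o(1)=\textbf{A}(\pm1)$ to see that this is equivalent to \eqref{condition1_k=0_new}--\eqref{condition2_k=0_new}. The paper simply asserts the first reduction without comment, whereas you spell out (correctly) that it holds because $\textbf{D}^n\textbf{e}_{1,2}=\textbf{e}_{1,2}$, so the level-dependent scaling is inert on constant data; your slightly hesitant phrasing there can be tightened to a single clean observation.
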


\begin{proof}
The reproduction of constants is equivalent to
	\begin{align*}
	\sum_{j\in \mathbb{Z}}A_{i-2j}\textbf{e}_{1,2}=\textbf{e}_{1,2} \quad \forall~i \in \mathbb{Z}.
	\end{align*}
We observe that from \eqref{condition1_k=0_new} and \eqref{condition2_k=0_new} we trivially have
$$
	2\textbf{e}_{1,2}{=}(\textbf{A}(1)+\textbf{A}(-1)) \textbf{e}_{1,2}=2\sum_{i\in \mathbb{Z}}A_{2i}\textbf{e}_{1,2}\quad 
\hbox{and} \quad
	2\textbf{e}_{1,2}{=}(\textbf{A}(1)-\textbf{A}(-1)) \textbf{e}_{1,2}=2\sum_{i\in \mathbb{Z}}A_{2i+1}\textbf{e}_{1,2},
$$
	which is the claim.
\end{proof}

Note that the reproduction of constants does not depend on the chosen parametrization. This is not surprising, since it is so in the scalar situation as well, see \cite{hormann}.

\begin{lem} \label{lemma1_new}
Let \(m\geqslant 1\). Then, condition (\ref{condition1_new}) is satisfied if and only if 
	\begin{align*}
	\textbf{A}_e^{(k)}(1) \textbf{e}_{1,2}+\sum_{\ell =1}^k (-1)^{\ell}\alpha^{1}_{k,\ell} \textbf{A}_e^{(k-\ell)}(1)  \textbf{e}_{2,2}&=
\frac{1}{2}\Big(\textbf{A}^{(k)}(1) \textbf{e}_{1,2}+\sum_{\ell =1}^k (-1)^{\ell}\alpha^{1}_{k,\ell} \textbf{A}^{(k-\ell)}(1)  \textbf{e}_{2,2}\Big),\\
\textbf{A}_o^{(k)}(1) \textbf{e}_{1,2}+\sum_{\ell =1}^k (-1)^{\ell}\alpha^{1}_{k,\ell} \textbf{A}_o^{(k-\ell)}(1)  \textbf{e}_{2,2}&=
\frac{1}{2}\Big(\textbf{A}^{(k)}(1) \textbf{e}_{1,2}+\sum_{\ell =1}^k (-1)^{\ell}\alpha^{1}_{k,\ell} \textbf{A}^{(k-\ell)}(1)  \textbf{e}_{2,2}\Big),
	\end{align*}
for all \(k=1,\dots,m\). Moreover, condition (\ref{condition1_k=0_new}) is satisfied if and only if 
	\begin{align*}
	\textbf{A}_e(1)\textbf{e}_{1,2}=\textbf{A}_o(1)\textbf{e}_{1,2}=\frac{1}{2}\textbf{A}(1)\textbf{e}_{1,2}.
	\end{align*}
\end{lem}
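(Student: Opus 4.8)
The plan is to exploit the decomposition $\textbf{A}(z)=\textbf{A}_e(z)+\textbf{A}_o(z)$ together with the observation that $\textbf{A}_e$ contains only even powers of $z$ while $\textbf{A}_o$ contains only odd powers. Consequently, differentiating $k$ times and evaluating at $z=1$ and at $z=-1$ relates the sub-symbols to the full symbol: since $\textbf{A}_e(z)=\textbf{A}_e(-z)$ one gets $\textbf{A}_e^{(k)}(-1)=(-1)^k\textbf{A}_e^{(k)}(1)$, and since $\textbf{A}_o(z)=-\textbf{A}_o(-z)$ one gets $\textbf{A}_o^{(k)}(-1)=(-1)^{k+1}\textbf{A}_o^{(k)}(1)$. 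Adding and subtracting these two identities yields, for every $k\geqslant 0$,
\[
\textbf{A}_e^{(k)}(1)=\tfrac12\bigl(\textbf{A}^{(k)}(1)+(-1)^k\textbf{A}^{(k)}(-1)\bigr),\qquad
\textbf{A}_o^{(k)}(1)=\tfrac12\bigl(\textbf{A}^{(k)}(1)-(-1)^k\textbf{A}^{(k)}(-1)\bigr).
\]
First I would establish these four elementary parity relations (for the symbol and both sub-symbols, at $z=1$ and $z=-1$), as they are the only structural input needed.

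Next I would plug these into the left-hand sides of the two claimed identities. Consider the combination
$\textbf{A}_e^{(k)}(1)\textbf{e}_{1,2}+\sum_{\ell=1}^k(-1)^\ell\alpha^1_{k,\ell}\textbf{A}_e^{(k-\ell)}(1)\textbf{e}_{2,2}$.
Using $\textbf{A}_e^{(k)}(1)=\tfrac12(\textbf{A}^{(k)}(1)+(-1)^k\textbf{A}^{(k)}(-1))$ and $\textbf{A}_e^{(k-\ell)}(1)=\tfrac12(\textbf{A}^{(k-\ell)}(1)+(-1)^{k-\ell}\textbf{A}^{(k-\ell)}(-1))$, this combination splits into a ``$+1$'' part equal to $\tfrac12\bigl(\textbf{A}^{(k)}(1)\textbf{e}_{1,2}+\sum_{\ell}(-1)^\ell\alpha^1_{k,\ell}\textbf{A}^{(k-\ell)}(1)\textbf{e}_{2,2}\bigr)$ and a ``$-1$'' part equal to $\tfrac{(-1)^k}{2}\bigl(\textbf{A}^{(k)}(-1)\textbf{e}_{1,2}+\sum_{\ell}(-1)^\ell(-1)^{-\ell}\alpha^1_{k,\ell}\textbf{A}^{(k-\ell)}(-1)\textbf{e}_{2,2}\bigr)$. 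The key cancellation is that $(-1)^\ell(-1)^{-\ell}=1$, so the ``$-1$'' part is exactly $\tfrac{(-1)^k}{2}$ times the left-hand side of condition \eqref{condition1_new}. Hence that part vanishes if and only if \eqref{condition1_new} holds, and then the combination for $\textbf{A}_e$ collapses to the asserted right-hand side. The computation for $\textbf{A}_o^{(k)}(1)$ is identical except that the ``$-1$'' part enters with the opposite sign, $-\tfrac{(-1)^k}{2}$ times the left-hand side of \eqref{condition1_new}, so it again vanishes precisely under \eqref{condition1_new} and leaves the same right-hand side. This proves both displayed equations simultaneously and shows they are jointly equivalent to \eqref{condition1_new}: the forward direction is the cancellation just described, and conversely, subtracting the $\textbf{A}_e$ identity from the sum of the $\textbf{A}_e$ and $\textbf{A}_o$ identities (or simply adding the two displayed identities and comparing with the definition $\textbf{A}^{(k)}=\textbf{A}_e^{(k)}+\textbf{A}_o^{(k)}$) recovers \eqref{condition1_new}.

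For the ``moreover'' part I would specialise to $k=0$: the relation $\textbf{A}_e(1)\textbf{e}_{1,2}=\tfrac12(\textbf{A}(1)+\textbf{A}(-1))\textbf{e}_{1,2}$ and $\textbf{A}_o(1)\textbf{e}_{1,2}=\tfrac12(\textbf{A}(1)-\textbf{A}(-1))\textbf{e}_{1,2}$ show that $\textbf{A}_e(1)\textbf{e}_{1,2}=\textbf{A}_o(1)\textbf{e}_{1,2}$ if and only if $\textbf{A}(-1)\textbf{e}_{1,2}=\textbf{0}_2$, i.e.\ \eqref{condition1_k=0_new}, and in that case both equal $\tfrac12\textbf{A}(1)\textbf{e}_{1,2}$. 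I do not anticipate a genuine obstacle here; the one point demanding care is bookkeeping the sign $(-1)^\ell$ from the coefficients $\tilde\alpha^1_{k,\ell}=(-1)^\ell\alpha^1_{k,\ell}$ against the parity factor $(-1)^{k-\ell}$ coming from $\textbf{A}^{(k-\ell)}(-1)$, and verifying that their product is the uniform factor $(-1)^k$ independent of $\ell$ — this is exactly what makes the ``$-1$'' contribution proportional to the single expression in \eqref{condition1_new} rather than a more complicated sum.
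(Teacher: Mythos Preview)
Your argument is correct and essentially matches the paper's: both use the parity relations $\textbf{A}_e^{(k)}(-1)=(-1)^k\textbf{A}_e^{(k)}(1)$ and $\textbf{A}_o^{(k)}(-1)=(-1)^{k+1}\textbf{A}_o^{(k)}(1)$ together with $\textbf{A}^{(k)}=\textbf{A}_e^{(k)}+\textbf{A}_o^{(k)}$ to show that \eqref{condition1_new} is equivalent to the even and odd sub-symbol combinations each equalling half the full one. The only slip is in your wording of the converse --- rather than the manipulation you describe, simply subtract the two displayed identities (or use either one alone, since your own computation shows each side differs from the common right-hand side by $\pm\tfrac{(-1)^k}{2}$ times the left-hand side of \eqref{condition1_new}) to recover \eqref{condition1_new} directly.
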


\begin{proof}
Let \(k \in \lbrace 1,\dots,m\rbrace\).We have \(\textbf{A}^{(k)}(z)=\textbf{A}_e^{(k)}(z)+\textbf{A}_o^{(k)}(z)\) and therefore especially
	\begin{align} \label{equation_1_new}
	\textbf{A}^{(k)}(1)=\textbf{A}_e^{(k)}(1)+\textbf{A}_o^{(k)}(1)
	\end{align}
and $\textbf{A}^{(k)}(-1)=(-1)^k\textbf{A}_e^{(k)}(1)+(-1)^{k+1}\textbf{A}_o^{(k)}(1)$.
So, condition (\ref{condition1_new}) is equivalent to
	\begin{equation} \label{equation_2_new}
	\textbf{A}_e^{(k)}(1)\textbf{e}_{1,2}+\sum_{\ell =1}^k (-1)^{\ell}\alpha^{1}_{k,\ell} \textbf{A}_e^{(k-\ell)}(1)  \textbf{e}_{2,2}
	=\textbf{A}_o^{(k)}(1) \textbf{e}_{1,2}+\sum_{\ell =1}^k (-1)^{\ell}\alpha^{1}_{k,\ell} \textbf{A}_o^{(k-\ell)}(1)  \textbf{e}_{2,2}. 
	\end{equation}
Now, using  (\ref{equation_1_new}) we write 

	\begin{align*}
	&\textbf{A}^{(k)}(1) \textbf{e}_{1,2}+\sum_{\ell =1}^k (-1)^{\ell}\alpha^{1}_{k,\ell} \textbf{A}^{(k-\ell)}(1)  \textbf{e}_{2,2}\\
	&=\textbf{A}_e^{(k)}(1) \textbf{e}_{1,2}+\sum_{\ell =1}^k (-1)^{\ell}\alpha^{1}_{k,\ell} \textbf{A}_e^{(k-\ell)}(1)  \textbf{e}_{2,2}+\textbf{A}_o^{(k)}(1) \textbf{e}_{1,2}+\sum_{\ell =1}^k (-1)^{\ell}\alpha^{1}_{k,\ell} \textbf{A}_o^{(k-\ell)}(1)  \textbf{e}_{2,2}
	\end{align*}
which, together with (\ref{equation_2_new}), proves the first part of the Lemma.
Condition (\ref{condition1_k=0_new}) is equivalent to \(\textbf{A}_{e}(1)\textbf{e}_{1,2}=\textbf{A}_{o}(1)\textbf{e}_{1,2}\) and since $\textbf{A}(1)=\textbf{A}_{e}(1)+\textbf{A}_{o}(1)$,
 the claim is proved in this case as well.
\end{proof}

In the following we make use of the polynomials \(q_{k,i}\) and \(\tilde{q}_{k,i}\) introduced in the previous section. First, we unite them into the vector polynomial 
	\begin{align*}
	Q_{k,i}(x):= \begin{bmatrix} q_{k,i}(x) \\ \tilde{q}_{k,i}(x)\end{bmatrix} \quad \text{with} \quad k\geqslant 0,~i\in \mathbb{Z}.
	\end{align*}

\begin{prop} \label{corollary1_new}
Let \(m\geqslant 1\). Then, conditions (\ref{condition1_new}) and (\ref{condition2_new}) are satisfied if and only if for all $i\in\mathbb{Z}$ and \(\tau \in \mathbb{R}\), 
	\begin{equation}\label{eq:Costi}
	\sum_{j\in \mathbb{Z}}A_{i-2j}Q_{k,i+2\tau}(-j-\tau)=\begin{bmatrix} q_{k,i+2\tau}(\frac{-i-\tau}{2}) \\ \frac{1}{2}\tilde{q}_{k,i+2\tau}(\frac{-i-\tau}{2})\end{bmatrix} \quad ~ k=1,\dots,m.	\end{equation}
Especially, conditions (\ref{condition1_k=0_new}) and (\ref{condition2_k=0_new}) are satisfied if and only if  \(\sum_{j\in \mathbb{Z}}A_{i-2j}Q_{0,i}(-j)=\textbf{e}_{1,2}\) for all \(i \in \mathbb{Z}\).
\end{prop}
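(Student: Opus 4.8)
Here is a proof plan.

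The plan is to rewrite both sides of \eqref{eq:Costi} explicitly, using the product form of the auxiliary polynomials, and then to expand the sum over $j$ in terms of the sub-symbols so that Lemma \ref{lemma1_new} can be quoted directly; throughout, $\tau$ denotes the fixed parametrization of the scheme. Since $q_{m,\ell}(x)=\prod_{r=0}^{m-1}(2x+\ell-r)$ (as in \eqref{def_polynomial_1}, now also for non-integer $\ell$), substituting $\ell=i+2\tau$ and $x=-j-\tau$ makes $\tau$ cancel, so $q_{k,i+2\tau}(-j-\tau)=\prod_{r=0}^{k-1}(i-2j-r)$ and, by \eqref{def_polynomial_2}, $\tilde q_{k,i+2\tau}(-j-\tau)=\sum_{n=1}^{k}(-1)^{n}\alpha^{1}_{k,n}\prod_{r=0}^{k-n-1}(i-2j-r)$. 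Writing $A_{i-2j}Q_{k,i+2\tau}(-j-\tau)$ as the first component of $Q_{k,i+2\tau}(-j-\tau)$ times $A_{i-2j}\textbf{e}_{1,2}$ plus the second component times $A_{i-2j}\textbf{e}_{2,2}$, the left-hand side of \eqref{eq:Costi} becomes
\[
\sum_{j\in\mathbb{Z}}\prod_{r=0}^{k-1}(i-2j-r)\,A_{i-2j}\textbf{e}_{1,2}+\sum_{n=1}^{k}(-1)^{n}\alpha^{1}_{k,n}\sum_{j\in\mathbb{Z}}\prod_{r=0}^{k-n-1}(i-2j-r)\,A_{i-2j}\textbf{e}_{2,2}.
\]
In the same way, substituting $\ell=i+2\tau$ and $x=\frac{-i-\tau}{2}$ makes $i$ cancel, so $q_{k,i+2\tau}(\frac{-i-\tau}{2})=\prod_{r=0}^{k-1}(\tau-r)=q_{k,2\tau}(-\frac{\tau}{2})$ and likewise $\tilde q_{k,i+2\tau}(\frac{-i-\tau}{2})=\tilde q_{k,2\tau}(-\frac{\tau}{2})$; hence the right-hand side of \eqref{eq:Costi} is independent of $i$ and equals $\frac{1}{2}$ times the right-hand side of \eqref{condition2_new}, which I call $R_k$.

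Next I would reindex $l=i-2j$: as $j$ runs over $\mathbb{Z}$, $l$ runs over all integers of the same parity as $i$, so by the formulas for $\textbf{A}_e^{(m)}$ and $\textbf{A}_o^{(m)}$ evaluated at $z=1$ one gets $\sum_{j\in\mathbb{Z}}\prod_{r=0}^{m-1}(i-2j-r)A_{i-2j}=\textbf{A}_e^{(m)}(1)$ when $i$ is even and $=\textbf{A}_o^{(m)}(1)$ when $i$ is odd (the case $m=0$, an empty product, giving $\textbf{A}_e(1)$, resp.\ $\textbf{A}_o(1)$). Hence, for even $i$, the left-hand side of \eqref{eq:Costi} equals $\textbf{A}_e^{(k)}(1)\textbf{e}_{1,2}+\sum_{n=1}^{k}(-1)^{n}\alpha^{1}_{k,n}\textbf{A}_e^{(k-n)}(1)\textbf{e}_{2,2}$, and for odd $i$ it equals the same expression with $\textbf{A}_o$ in place of $\textbf{A}_e$. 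Since both sides of \eqref{eq:Costi} depend on $i$ only through its parity, ``\eqref{eq:Costi} holds for all $i\in\mathbb{Z}$'' is equivalent to equating each of these two expressions to $R_k$, which in turn holds if and only if (i) the two expressions agree and (ii) their common value is $R_k$.

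Finally I would invoke Lemma \ref{lemma1_new}. Its first part says that (i), for all $k=1,\dots,m$, is equivalent to \eqref{condition1_new}. Moreover, since $\textbf{A}_e^{(m)}(1)+\textbf{A}_o^{(m)}(1)=\textbf{A}^{(m)}(1)$, the sum of the two expressions is, for every $k$, equal to $\textbf{A}^{(k)}(1)\textbf{e}_{1,2}+\sum_{\ell=1}^{k}(-1)^{\ell}\alpha^{1}_{k,\ell}\textbf{A}^{(k-\ell)}(1)\textbf{e}_{2,2}$; therefore (ii) --- i.e., half of this sum equals $R_k$ --- is, after multiplying by $2$ (recall $2R_k$ is the right-hand side of \eqref{condition2_new}) and using $\tilde\alpha^{1}_{k,\ell}=(-1)^{\ell}\alpha^{1}_{k,\ell}$, exactly \eqref{condition2_new}. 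This gives the equivalence between \eqref{eq:Costi} (for all $i\in\mathbb{Z}$ and all $k=1,\dots,m$) and \eqref{condition1_new}--\eqref{condition2_new}. For the last assertion I would repeat the argument with $k=0$: there $Q_{0,i}\equiv\textbf{e}_{1,2}$, so $\sum_{j\in\mathbb{Z}}A_{i-2j}Q_{0,i}(-j)$ equals $\textbf{A}_e(1)\textbf{e}_{1,2}$ for $i$ even and $\textbf{A}_o(1)\textbf{e}_{1,2}$ for $i$ odd, hence equals $\textbf{e}_{1,2}$ for all $i$ iff $\textbf{A}_e(1)\textbf{e}_{1,2}=\textbf{A}_o(1)\textbf{e}_{1,2}=\textbf{e}_{1,2}$, which by the second part of Lemma \ref{lemma1_new} means precisely \eqref{condition1_k=0_new} together with $\frac{1}{2}\textbf{A}(1)\textbf{e}_{1,2}=\textbf{e}_{1,2}$, i.e.\ \eqref{condition2_k=0_new}.

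The computation is essentially bookkeeping; the only delicate points --- and hence the main obstacle --- are the sign and scaling conventions. The parametrization $\tau$ cancels in the argument $-j-\tau$ but survives in $\frac{-i-\tau}{2}$, which is exactly what produces the factor $\frac{1}{2}$ between the right-hand sides of \eqref{eq:Costi} and \eqref{condition2_new} (in particular the left-hand side of \eqref{eq:Costi} is independent of $\tau$); the $\alpha^{1}_{k,\ell}$-weighted sums must be carried through the even/odd split consistently so that Lemma \ref{lemma1_new} --- which already packages the $(-1)^{k}$ relating $\textbf{A}^{(k)}(-1)$ to $\textbf{A}_e^{(k)}(1)-\textbf{A}_o^{(k)}(1)$ as well as the passage from $\alpha^{1}_{k,\ell}$ to $\tilde\alpha^{1}_{k,\ell}$ --- can be applied verbatim; and one has to keep in mind that the $n=k$ term of $\tilde q_{k,i}$ carries the empty product $\prod_{r=0}^{-1}=1$, matching $\textbf{A}_e^{(0)}(1)=\textbf{A}_e(1)$.
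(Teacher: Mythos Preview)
Your proposal is correct and follows essentially the same route as the paper: rewrite the left-hand side of \eqref{eq:Costi} so that for even/odd $i$ it becomes the even/odd sub-symbol expression $\textbf{A}_{e/o}^{(k)}(1)\textbf{e}_{1,2}+\sum_{\ell}(-1)^{\ell}\alpha^{1}_{k,\ell}\textbf{A}_{e/o}^{(k-\ell)}(1)\textbf{e}_{2,2}$, then use Lemma~\ref{lemma1_new} to link these with \eqref{condition1_new} and \eqref{condition2_new}. The only minor difference is presentational: the paper derives the unconditional identity $\textbf{A}_{e}^{(k)}(1)=\sum_{j}q_{k,2s+2\tau}(-j-\tau)A_{2(s-j)}$ and then chains equalities using Lemma~\ref{lemma1_new} and \eqref{condition2_new}, whereas you make the biconditional explicit by splitting ``both parity expressions equal $R_k$'' into (i) they agree (equivalent to \eqref{condition1_new} via the lemma) and (ii) their sum equals $2R_k$ (equivalent to \eqref{condition2_new}); this is the same argument, just with the two directions made more transparent.
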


\begin{proof}
Observe that by definition of the class of polynomials in (\ref{def_polynomial_1}) we obtain
	\begin{align*}
	\textbf{A}_e^{(k)}(1)&=\sum_{j\in \mathbb{Z}}q_{k,2(j-t)}(t)A_{2j}=\sum_{j\in \mathbb{Z}}q_{k,2t+2\tau}(-j-\tau)A_{2(t-j)},\\
		\textbf{A}_o^{(k)}(1)&=\sum_{j\in \mathbb{Z}}q_{k,2(j-t)+1}(t)A_{2j+1}=\sum_{j\in \mathbb{Z}}q_{k,2t+2\tau+1}(-j-\tau)A_{2(t-j)+1}.
	\end{align*}
for all \(t\in \mathbb{Z}\) and $\tau\in \mathbb{R}$. Let \(i \in 2\mathbb{Z}\) with \(i=2s\) for some \(s \in \mathbb{Z}\). 
This observation together with Lemma \ref{lemma1_new} implies 
	\begin{align*}
	\sum_{j\in \mathbb{Z}}&A_{i-2j}Q_{k,i+2\tau}(-j-\tau)=\sum_{j\in \mathbb{Z}}A_{2(s-j)}Q_{k,2s+2\tau}(-j-\tau) \\
	&=\sum_{j\in \mathbb{Z}}q_{k,2s+2\tau}(-j-\tau)A_{2(s-j)}\textbf{e}_{1,2}+\sum_{j\in \mathbb{Z}}\tilde{q}_{k,2s+2\tau}(-j-\tau)A_{2(s-j)}\textbf{e}_{2,2} \\ 
	&= \sum_{j\in \mathbb{Z}}q_{k,2s+2\tau}(-j-\tau)A_{2(s-j)}\textbf{e}_{1,2}+\sum_{j\in \mathbb{Z}}\sum_{\ell=1}^{k}(-1)^{\ell}\alpha^{1}_{k,\ell}q_{k-\ell,2s+2\tau}(-j-\tau)A_{2(s-j)}\textbf{e}_{2,2} \\
	&=\textbf{A}_{e}^{(k)}(1) \textbf{e}_{1,2}+\sum_{\ell =1}^k (-1)^{\ell}\alpha^{1}_{k,\ell}\textbf{A}_{e}^{(k-\ell)}(1)\textbf{e}_{2,2}\\
	&=\frac{1}{2}\Big(\textbf{A}^{(k)}(1) \textbf{e}_{1,2}+\sum_{\ell =1}^k (-1)^{\ell}\alpha^{1}_{k,\ell}\textbf{A}^{(k-\ell)}(1) \textbf{e}_{2,2} \Big)\\
	&=\begin{bmatrix} q_{k,i+2\tau}(\frac{-i-\tau}{2}) \\ \frac{1}{2}\tilde{q}_{k,i+2\tau}(\frac{-i-\tau}{2})\end{bmatrix},\qquad \hbox{showing the claim for $i$ even.}
	\end{align*}

Similarly, for odd \(i\in \mathbb{Z}\), \(i=2s+1\), we obtain that $\displaystyle{\sum_{j\in \mathbb{Z}}A_{i-2j}Q_{k,i+2\tau}(-j-\tau)=\begin{bmatrix} q_{k,i+2\tau}(\frac{-i-\tau}{2}) \\ \frac{1}{2}\tilde{q}_{k,i+2\tau}(\frac{-i-\tau}{2})\end{bmatrix}}.$

The second part of the corollary follows by Lemma \ref{lemma_reproduction_constants_new} and \(q_{0,i}=1\) and \(\tilde{q}_{0,i}=0\).
\end{proof}

Note that the right-hand side \eqref{eq:Costi} actually does not depend on \(i \in \mathbb{Z}\) since $q_{k,i+2\tau}(\frac{-i-\tau}{2})=q_{k,2\tau}(\frac{-\tau}{2}) $.

\begin{prop} \label{prop_reproduction_new}
Let \(H_{\mathcal{A}}\) be a Hermite subdivision scheme with parametrization \(\tau\) and \(m\geqslant 0\). Then, \(H_{\mathcal{A}}\) satisfies conditions \eqref{condition1_k=0_new} -- \eqref{condition2_k=0_new} and
conditions (\ref{condition1_new}) -- (\ref{condition2_new})  for all \(k=1,\dots,m\),  if and only if				
	\begin{align}
	\sum_{j\in \mathbb{Z}} A_{i-2j}\textbf{e}_{1,2}&=\textbf{e}_{1,2} \quad i\in\mathbb{Z}, \notag\\
	\sum_{j\in \mathbb{Z}} A_{i-2j}\begin{bmatrix} (j+\tau)^{k} \\ k(j+\tau)^{k-1}\end{bmatrix}&=\frac{1}{2^k}\begin{bmatrix} (i+\tau)^k \\ k(i+\tau)^{k-1}\end{bmatrix} \quad k=1,\dots,m, \quad i\in \mathbb{Z}\label{second2} ,
	\end{align}
	with the convention that \eqref{second2} (resp. (\ref{condition1_new}) -- (\ref{condition2_new})) is empty if $m=0$.
\end{prop}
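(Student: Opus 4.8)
The plan is to show that Proposition~\ref{prop_reproduction_new} is essentially a reformulation of Proposition~\ref{corollary1_new}, combined with a change of polynomial basis. The strategy has two directions, but both hinge on the same algebraic fact: the family of vector polynomials $\{Q_{k,i}(x)\}_{k=0}^m$ spans the same space as the family $\{[x^k,\ k x^{k-1}]^T\}_{k=0}^m$ of ``Taylor vectors'', and the latter is precisely what appears in~\eqref{second2}. So the first step is to make this basis-change explicit. Since $q_{k,i}(x)=\prod_{r=0}^{k-1}(2x+i-r)$ has leading term $(2x)^k$, and since by Proposition~\ref{relation_coefficients} the second component $\tilde q_{k,i}$ has coefficients tied to those of $q_{k,i}$ via $\gamma_n^{k,i}=\frac{1}{n}\tilde\gamma_{n-1}^{k,i}$, one recognizes that $Q_{k,i}(x)$ is, up to a triangular invertible change of basis with constant coefficients (independent of $i$), the vector $[\,(2x+i)^k,\ k(2x+i)^{k-1}\,]^T$ — indeed $\tilde q_{k,i}$ behaves like the ``derivative'' of $q_{k,i}$ in the sense that its coefficients are obtained by the rule $n\gamma_n^{k,i}=\tilde\gamma_{n-1}^{k,i}$, which is exactly the relation between the coefficients of a polynomial $p$ and those of $x\,p'(x)$ rescaled. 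I would record this as: there exist constants $c_{k,\ell}$ (lower-triangular, $c_{k,k}\neq 0$) with $[\,(2x+i)^k,\ k(2x+i)^{k-1}\,]^T=\sum_{\ell=0}^k c_{k,\ell}\,Q_{\ell,i}(x)$, and vice versa.

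Granting that, the proof proceeds by substitution. Proposition~\ref{corollary1_new} tells us that conditions~\eqref{condition1_new}--\eqref{condition2_new} (together with the constant case) are equivalent to
\[
\sum_{j\in\mathbb{Z}}A_{i-2j}\,Q_{k,i+2\tau}(-j-\tau)=\begin{bmatrix} q_{k,i+2\tau}(\tfrac{-i-\tau}{2})\\[2pt] \tfrac12\tilde q_{k,i+2\tau}(\tfrac{-i-\tau}{2})\end{bmatrix},\qquad k=0,\dots,m.
\]
Now I evaluate the basis-change identity at the point $x=-j-\tau$ with the index parameter $i$ replaced by $i+2\tau$: there $2x+(i+2\tau)=-2j-2\tau+i+2\tau=i-2j$, so $[\,(2x+i+2\tau)^k,\ k(2x+i+2\tau)^{k-1}\,]^T=[\,(i-2j)^k,\ k(i-2j)^{k-1}\,]^T$. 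Hence the span of $\{Q_{k,i+2\tau}(-j-\tau)\}_k$ (over $k=0,\dots,m$, with $i,j$ fixed) coincides with the span of $\{[(i-2j)^k, k(i-2j)^{k-1}]^T\}_k$, and the triangular relation lets us convert the system above, termwise, into the equivalent system obtained by replacing $Q_{k,i+2\tau}(-j-\tau)$ by the Taylor vector. Writing $i-2j=(i+\tau)-2(j+\tau)$ and using the binomial theorem to expand $((i+\tau)-2(j+\tau))^k$ and its analogue for the derivative component, the left side becomes a triangular combination (in $k$, with $i$-independent but $\tau$-dependent coefficients) of the sums $\sum_j A_{i-2j}[(j+\tau)^\ell,\ \ell(j+\tau)^{\ell-1}]^T$. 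Meanwhile, the right-hand side: by the remark following Proposition~\ref{corollary1_new} and the third bullet of the Remark, $q_{k,i+2\tau}(\tfrac{-i-\tau}{2})=q_{k,2\tau}(\tfrac{-\tau}{2})=\prod_{r=0}^{k-1}(\tau-r)$, and applying the same basis change on the scalar level shows the right side is the corresponding triangular combination of $\tfrac{1}{2^k}[(i+\tau)^k, k(i+\tau)^{k-1}]^T$. Cancelling the common invertible triangular matrix yields~\eqref{second2}, and conversely.

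For the case $m=0$ (the constants), I would simply note that this is the ``$k=0$'' instance: $Q_{0,i}\equiv\textbf{e}_{1,2}$, the right-hand side of~\eqref{eq:Costi} is $\textbf{e}_{1,2}$, and this is literally the first line of~\eqref{second2}; so the equivalence is immediate from the last sentence of Proposition~\ref{corollary1_new} and Lemma~\ref{lemma_reproduction_constants_new}. Finally, for the overall statement it remains to connect this to Definition~\ref{def_reproduction}: one checks that reproduction of the monomial $g(x)=x^k$ (equivalently, of $1,x,\dots,x^m$, since reproduction is linear in the data and a polynomial of degree $m$ is a combination of these) is equivalent to~\eqref{second2}, because for $g(x)=x^k$ we have $\textbf{D}^n\textbf{f}_n(i)=[ (i+\tau)^k/2^{nk},\ k(i+\tau)^{k-1}/2^{n(k-1)+n},\dots]$ and feeding the $n=0$ data through the single subdivision step~\eqref{subdivision_rule} reproduces the $n=1$ data exactly when~\eqref{second2} holds (an induction on $n$ then covers all levels, using that the reproduced data at level $n$ is again of Hermite-sampled form). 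The main obstacle I anticipate is pinning down precisely the triangular basis-change matrix relating $Q_{k,i}$ to the Taylor vectors and verifying its invertibility and $i$-independence cleanly — everything else is bookkeeping with binomial expansions — but Proposition~\ref{relation_coefficients} together with the explicit leading coefficients $\gamma_k^{k,i}\neq0$ and $\tilde\gamma_{k-1}^{k,i}=k\gamma_k^{k,i}$ supplies exactly the triangularity and non-degeneracy needed.
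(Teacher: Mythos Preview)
Your high-level strategy---derive the proposition from Proposition~\ref{corollary1_new} by a triangular change of polynomial basis rather than by the paper's degree-by-degree induction---is sound and is a genuinely different route. The paper's proof expands $Q_{m,i+2\tau}(-j-\tau)$ in the $\gamma$-coefficients, peels off the top-degree term, applies the induction hypothesis to the lower-degree remainder, and then reassembles; your approach, if carried out correctly, packages all of this into one invertible linear substitution. That is arguably cleaner.

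However, the concrete identities you write down are wrong, and the errors sit exactly where the argument is delicate. First, from Proposition~\ref{relation_coefficients} one gets $\tilde q_{k,i}(-x)=\frac{d}{dx}\bigl(q_{k,i}(-x)\bigr)$, hence $\tilde q_{k,i}(x)=-q_{k,i}'(x)$. Since $q_{k,i}(x)=(2x+i)^{\underline k}$, any combination $\sum_\ell c_{k,\ell}Q_{\ell,i}(x)$ is necessarily of the form $[p(x),\,-p'(x)]^T$; taking $p(x)=(2x+i)^k$ gives $[(2x+i)^k,\,-2k(2x+i)^{k-1}]^T$, not $[(2x+i)^k,\,k(2x+i)^{k-1}]^T$ as you assert. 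Second, your expansion $i-2j=(i+\tau)-2(j+\tau)$ is off by $\tau$: the correct identity is $i-2j=(i+2\tau)-2(j+\tau)$. Third, the binomial coefficients in that expansion involve powers of $(i+2\tau)$ and are therefore not $i$-independent.

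These are not cosmetic slips: the factor $-2$ in the second component is precisely what is needed, because (i) it makes the \emph{vector} binomial identity
\[
\begin{bmatrix}(i-2j)^k\\ -2k(i-2j)^{k-1}\end{bmatrix}
=\sum_{\ell=0}^{k}\binom{k}{\ell}(-2)^{\ell}(i+2\tau)^{k-\ell}
\begin{bmatrix}(j+\tau)^{\ell}\\ \ell(j+\tau)^{\ell-1}\end{bmatrix}
\]
hold with a \emph{single} set of scalar coefficients, and (ii) it matches the $\tfrac12$ on the second component of the right-hand side of Proposition~\ref{corollary1_new}, which (after evaluation at $2x+i+2\tau=\tau$) becomes $[\tau^{\underline k},\,-(\tau^{\underline k})']^T$. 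With these corrections, your Stirling change of basis converts Proposition~\ref{corollary1_new} to the equivalent system $\sum_j A_{i-2j}[(i-2j)^k,\,-2k(i-2j)^{k-1}]^T=[\tau^k,\,-k\tau^{k-1}]^T$, and then the vector binomial identity above (together with a short induction on $k$ to invert the triangular system, leading coefficient $(-2)^k\neq0$) yields~\eqref{second2}; the converse is the same computation read backwards. Finally, the last paragraph connecting to Definition~\ref{def_reproduction} is not needed here: the proposition is a purely algebraic equivalence, and the passage to actual reproduction is carried out separately in the proof of Theorem~\ref{main_result_new}.
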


\begin{proof}
We prove the proposition by induction over \(m\). The case \(m=0\) follows by Lemma \ref{lemma_reproduction_constants_new}. Assume that the statement is true for some \(m-1\) and all \(k=1,\dots ,m-1\). 
The proof use the representations of the polynomials \(q_{k,\ell}(x)\) and \(\tilde{q}_{k,\ell}(x)\) as in (\ref{coefficients1}) and (\ref{coefficients2}). For \(i\in\mathbb{Z}\),  using Proposition 
\ref{corollary1_new} we obtain
	\begin{align*}
&\begin{bmatrix} q_{m,i+2\tau}(\frac{-i-\tau}{2}) \\ \frac{1}{2}\tilde{q}_{m,i+2\tau}(\frac{-i-\tau}{2})\end{bmatrix}=\sum_{j\in \mathbb{Z}}A_{i-2j}Q_{m,i+2\tau}(-j-\tau)\\
	&=\sum_{j\in \mathbb{Z}}A_{i-2j}q_{m,i+2\tau}(-j-\tau)\textbf{e}_{1,2}+\sum_{j\in \mathbb{Z}}A_{i-2j}\tilde{q}_{m,i+2\tau}(-j-\tau)\textbf{e}_{2,2}\\
	&=\sum_{j\in \mathbb{Z}}A_{i-2j} \sum_{n=0}^{m}\gamma_{n}^{m,i+2\tau}\begin{bmatrix} (j+\tau)^n \\ 0\end{bmatrix}+\sum_{j\in \mathbb{Z}}A_{i-2j}\sum_{n=0}^{m-1}\tilde{\gamma}^{m,i+2\tau}_{n}\begin{bmatrix} 0 \\ (j+\tau)^n\end{bmatrix}\\
	&=\gamma_{m}^{m,i+2\tau}\sum_{j\in \mathbb{Z}}A_{i-2j}\begin{bmatrix} (j+\tau)^m \\ m(j+\tau)^{m-1}\end{bmatrix}+\\
	&\underbrace{\sum_{j\in \mathbb{Z}}A_{i-2j} \sum_{n=0}^{m-1}\gamma_{n}^{m,i+2\tau}\begin{bmatrix} (j+\tau)^n \\ 0\end{bmatrix}+\sum_{j\in \mathbb{Z}}A_{i-2j}\sum_{n=0}^{m-2}\tilde{\gamma}^{m,i+2\tau}_{n}\begin{bmatrix} 0 \\ (j+\tau)^n\end{bmatrix}}_{\text{($\ast$)}}.
	\end{align*}
Note that we used the relation \(\tilde{\gamma}_{m-1}^{m,i+2\tau}=m\gamma_{m}^{m,i+2\tau}\) to obtain the last equality above, see Proposition \ref{relation_coefficients}. Before we can apply the induction hypothesis to ($\ast$) we apply Proposition  \ref{relation_coefficients} again and get
	\begin{align*}
	&\sum_{j\in \mathbb{Z}}A_{i-2j} \sum_{n=0}^{m-1}\gamma_{n}^{m,i+2\tau}\begin{bmatrix} (j+\tau)^n \\ 0\end{bmatrix}+\sum_{j\in \mathbb{Z}}A_{i-2j}\sum_{n=0}^{m-2}\tilde{\gamma}^{m,i+2\tau}_{n}\begin{bmatrix} 0 \\ (j+\tau)^n\end{bmatrix} \\
	&=\gamma_{0}^{m,i+2\tau}\sum_{j\in \mathbb{Z}}A_{i-2j}\textbf{e}_{1,2}+ \sum_{n=0}^{m-2}\Big(\gamma_{n+1}^{m,i+2\tau}\sum_{j\in \mathbb{Z}}A_{i-2j}\begin{bmatrix} (j+\tau)^{n+1} \\ 0\end{bmatrix}+\tilde{\gamma}_{n}^{m,i+2\tau}\sum_{j\in \mathbb{Z}}A_{i-2j}\begin{bmatrix} 0 \\ (j+\tau)^{n}\end{bmatrix}\Big)\\
	&=\gamma_{0}^{m,i+2\tau}\sum_{j\in \mathbb{Z}}A_{i-2j}\textbf{e}_{1,2}+ \sum_{n=0}^{m-2}\gamma_{n+1}^{m,i+2\tau}\sum_{j\in \mathbb{Z}}A_{i-2j}\begin{bmatrix} (j+\tau)^{n+1} \\ (n+1)(j+\tau)^{n}\end{bmatrix}. \\
	\end{align*}
Now we use the assumption that the scheme reproduces constants for the first part of the right-hand side and apply the induction hypothesis for \(k=1,\dots,m-1\) to the second part. Therefore,  	
	\begin{align*}
	\sum_{j\in \mathbb{Z}}A_{i-2j} \sum_{n=0}^{m-1}\gamma_{n}^{m,i+2\tau}\begin{bmatrix} (j+\tau)^n \\ 0\end{bmatrix}&+\sum_{j\in \mathbb{Z}}A_{i-2j}\sum_{n=0}^{m-2}\tilde{\gamma}^{m,i+2\tau}_{n}\begin{bmatrix} 0 \\ (j+\tau)^n\end{bmatrix} \\
	&=\underbrace{\gamma_{0}^{m,i+2\tau}\textbf{e}_{1,2}+ \sum_{n=0}^{m-2}\frac{1}{2^{n+1}}\gamma_{n+1}^{m,i+2\tau} \begin{bmatrix} (i+\tau)^{n+1} \\ (n+1)(i+\tau)^{n}\end{bmatrix}}_{\text{($\ast \ast$)}}.
	\end{align*}
The next step is to rewrite the sum ($\ast \ast$) by first applying Proposition \ref{relation_coefficients} and then using the definition of the polynomial \(q_{m,i}\) (resp. \(\tilde{q}_{m,i}\)) as in (\ref{coefficients1}) (resp.(\ref{coefficients2})). So, 
	\begin{align*}
	\gamma_{0}^{m,i+2\tau}&\textbf{e}_{1,2}+ \sum_{n=0}^{m-2}\frac{1}{2^{n+1}}\gamma_{n+1}^{m,i+2\tau} \begin{bmatrix} (i+\tau)^{n+1} \\ (n+1)(i+\tau)^{n}\end{bmatrix}\\
	&=\gamma_{0}^{m,i+2\tau}\textbf{e}_{1,2}+ \sum_{n=0}^{m-1}\gamma_{n+1}^{m,i+2\tau} \Big(\frac{i+\tau}{2}\Big)^{n+1}\textbf{e}_{1,2}\\
	&~~+\frac{1}{2}\sum_{n=0}^{m-1}\tilde{\gamma}_{n}^{m,i+2\tau} \Big(\frac{i+\tau}{2}\Big)^{n}\textbf{e}_{2,2}-\gamma_{m}^{m,i+2\tau}\Big(\frac{i+\tau}{2}\Big)^{m}\textbf{e}_{1,2}-\frac{1}{2}\tilde{\gamma}^{m,i+2\tau}_{m-1}\Big(\frac{i+\tau}{2}\Big)^{m-1}\textbf{e}_{2,2}\\
	&=\gamma_{0}^{m,i+2\tau}\textbf{e}_{1,2}+q_{m,i+2\tau}\Big(-\frac{i+\tau}{2}\Big)\textbf{e}_{1,2}-\gamma_{0}^{m,i+2\tau}\textbf{e}_{1,2}+ \frac{1}{2} \tilde{q}_{m,i+2\tau}\Big(-\frac{i+\tau}{2}\Big)\textbf{e}_{2,2}\\
		&~~-\gamma_{m}^{m,i+2\tau}\Big(\frac{i+\tau}{2}\Big)^{m}\textbf{e}_{1,2}-\frac{1}{2}\tilde{\gamma}^{m,i+2\tau}_{m-1}\Big(\frac{i+\tau}{2}\Big)^{m-1}\textbf{e}_{2,2}\\
		&=q_{m,i+2\tau}\Big(-\frac{i+\tau}{2}\Big)\textbf{e}_{1,2}+ \frac{1}{2} \tilde{q}_{m,i+2\tau}\Big(-\frac{i+\tau}{2}\Big)\textbf{e}_{2,2}-\gamma_{m}^{m,i+2\tau}\Big(\frac{i+\tau}{2}\Big)^{m}\textbf{e}_{1,2}\\
		&~~-\frac{1}{2}\tilde{\gamma}^{m,i+2\tau}_{m-1}\Big(\frac{i+\tau}{2}\Big)^{m-1}\textbf{e}_{2,2}.
	\end{align*}	
We apply Proposition \ref{relation_coefficients} to the right-hand side above to obtain 
$$
\gamma_{0}^{m,i+2\tau}\textbf{e}_{1,2}+ \frac{1}{2^{n+1}}\sum_{n=0}^{m-2}\gamma_{n+1}^{m,i+2\tau} \begin{bmatrix} (i+\tau)^{n+1} \\ (n+1)(i+\tau)^{n}\end{bmatrix}=\begin{bmatrix} q_{m,i+2\tau}(\frac{-i-\tau}{2}) \\ \frac{1}{2}\tilde{q}_{m,i+2\tau}(\frac{-i-\tau}{2})\end{bmatrix}-\gamma_{m}^{m,i+2\tau}\Big(\frac{1}{2}\Big)^{m}\begin{bmatrix} (i+\tau)^m \\ m(i+\tau)^{m-1}\end{bmatrix}.
$$
Summarizing our previous computations leads to 
	\begin{align*}
	\gamma_{m}^{m,i+2\tau}\sum_{j\in \mathbb{Z}}A_{i-2j}\begin{bmatrix} (j+\tau)^m \\ m(j+\tau)^{m-1}\end{bmatrix}&=\gamma_{m}^{m,i+2\tau}\Big(\frac{1}{2}\Big)^{m}\begin{bmatrix} (i+\tau)^m \\ m(i+\tau)^{m-1}\end{bmatrix}.
	\end{align*}
Since \(\gamma_{m}^{m,i+2\tau}\neq 0\), this 
	concludes the induction step.
\end{proof}
\begin{Rem} If \(m=1\) the sums in the proof above which are not defined are assumed to be zero. The conclusion of the Proposition in this case is still true.
\end{Rem}

\smallskip We are finally in a position to prove Theorem  \ref{main_result_new}.
\begin{proof}[Proof of Theorem \ref{main_result_new}]
We prove the statement by induction over the degree \(m\) of the polynomials. For \(m=0\) we refer to Lemma \ref{lemma_reproduction_constants_new}. 
So, for $m\geqslant 1$ assume the statement is true for some \(m-1\) and all \(k=0,\dots ,m-1\) and show that the Hermite subdivision scheme \(H_{\mathcal{A}}\) reproduces polynomials of degree \(m\). Let \(p(x)=x^m+g(x)\) with \(g(x) \in \prod_{m-1}\). By Definition \ref{def_reproduction} we have to show that for $n\in \mathbb{N}$ and $i\in \mathbb{Z}$,
	\begin{align*}
	\textbf{f}_{n}(i)=\begin{bmatrix} p((i+\tau)/2^n) \\ p^\prime((i+\tau)/2^n)\end{bmatrix} \quad  \Longrightarrow \quad \textbf{f}_{n+1}(i)=\begin{bmatrix} p((i+\tau)/2^{n+1}) \\ p^\prime((i+\tau)/2^{n+1})\end{bmatrix}.
	\end{align*}
Let \(n \in \mathbb{N}\) and \(i \in \mathbb{Z}\). 
By (\ref{subdivision_rule}) we have
	\begin{align*}
	\textbf{D}^{n+1}\textbf{f}_{n+1}(i)=\sum_{j \in \mathbb{Z}}A_{i-2j}\textbf{D}^n\begin{bmatrix} ((j+\tau)/2^n)^m \\m((j+\tau)/2^n)^{m-1}\end{bmatrix}+\sum_{j \in \mathbb{Z}}A_{i-2j}\textbf{D}^n\begin{bmatrix} g((j+\tau)/2^{n}) \\ g^\prime((j+\tau)/2^{n})\end{bmatrix}. 
	\end{align*}
This is equivalent to 
	\begin{align*}
	2^{nm}\textbf{D}^{n+1}\textbf{f}_{n+1}(i)
	=\sum_{j \in \mathbb{Z}}A_{i-2j}\begin{bmatrix} (j+\tau)^m \\m(j+\tau)^{m-1}\end{bmatrix}+2^{nm}\sum_{j \in \mathbb{Z}}A_{i-2j}\textbf{D}^n\begin{bmatrix} g((j+\tau)/2^{n}) \\ g^\prime((j+\tau)/2^{n})\end{bmatrix}.
	\end{align*}
Now, we apply Proposition \ref{prop_reproduction_new} to the first summand of the right-hand side above and the induction hypothesis to the second. We obtain 
	\begin{align*}
	2^{nm}\textbf{D}^{n+1}\textbf{f}_{n+1}(i)=\frac{1}{2^m}
	\begin{bmatrix} (i+\tau)^m \\m(i+\tau)^{m-1}\end{bmatrix}+2^{nm}\textbf{D}^{n+1}\begin{bmatrix} g((i+\tau)/2^{n+1}) \\ g^\prime((i+\tau)/2^{n+1})\end{bmatrix}.
	\end{align*}
So, we see that $\textbf{D}^{n+1}\textbf{f}_{n+1}(i)=\begin{bmatrix} 2^{-(n+1)m}(i+\tau)^m +g((i+\tau)/2^{n+1}) \\2^{-(n+1)m}m(i+\tau)^{m-1}+2^{-(n+1)}g^\prime((i+\tau)/2^{n+1}) \end{bmatrix}$.
This is equivalent to $\textbf{f}_{n+1}(i)=\begin{bmatrix} ((i+\tau)/2^{n+1})^m +g((i+\tau)/2^{n+1}) \\m((i+\tau)/2^{n+1})^{m-1}+g^\prime((i+\tau)/2^{n+1}) \end{bmatrix}
$
which proves the claim. 
\end{proof}

\section{Example of Hermite schemes of order $d=2$} \label{ex:interp}
We start with the interpolatory Hermite subdivision scheme \(H_{\mathcal{A}}\) introduced by Merrien in \cite{merrien1}. 
Its non-zero mask coefficients are given by
	\begin{align*}
	A_{-1}=\begin{bmatrix} \frac{1}{2} & \lambda\\ \frac{1}{2}(1-\mu) & \frac{\mu}{4}\end{bmatrix}, \quad \quad 
	A_{0}=\begin{bmatrix} 1 & 0\\ 0& \frac{1}{2}\end{bmatrix}, \quad \quad
	A_{1}=\begin{bmatrix} \frac{1}{2} & -\lambda\\ \frac{1}{2}(\mu-1)& \frac{\mu}{4}\end{bmatrix}. 
\	
	\end{align*}
It is known that the scheme reproduces polynomials of degree 1 for all \(\lambda\), \(\mu \in \mathbb{R}\). It reproduces \(\prod_2\) if and only if \(\lambda=-\frac{1}{8}\). It reproduces polynomials of degree 3, if additionally \(\mu=-\frac{1}{2}\). Computations show that this result coincides with our conditions of Theorem \ref{main_result_new}. 
Moreover, it tells us that the Hermite scheme \(H_{\mathcal{A}}\) does not reproduces polynomials of degree 4 since
$$%	\begin{align*} 
	\textbf{A}^{(4)}(-1)\textbf{e}_{1,2}+8\textbf{A}^{(3)}(-1)\textbf{e}_{2,2}+12\textbf{A}^{(2)}(-1)\textbf{e}_{2,2} +16\textbf{A}^{(1)}(-1)\textbf{e}_{2,2}+12\textbf{A}(-1)\textbf{e}_{2,2}=\textbf{e}_{1,2}\neq \textbf{0}_{2}.
$$
Here, we use the values of the coefficients \(\alpha^{1}_{k,\ell}\) as given in Table \ref{table_c_kl}.

\smallskip Next, we use our algebraic conditions to obtain a new Hermite subdivision scheme which reproduces polynomials of degree higher than 3 by only slightly increasing the support of the scheme. Consider the interpolatory Hermite subdivision scheme \(H_{\mathcal{\bar{A}}}\) with non-zero mask coefficients
\begin{align*} 
	\bar{A}_{-3}=\begin{bmatrix} b_{1} & b_{2}\\ b_{3} & b_{4}\end{bmatrix}, \quad \bar{A}_{-1}=\begin{bmatrix} a_{1} & a_{2}\\ a_{3} & a_{4}\end{bmatrix}, \quad \bar{A}_{0}=\begin{bmatrix} 1 & 0\\ 0& \frac{1}{2}\end{bmatrix}, \quad \bar{A}_{1}=\begin{bmatrix} a_{1} & -a_{2}\\ -a_{3}& a_{4}\end{bmatrix}, \quad \bar{A}_{3}=\begin{bmatrix} b_{1} & -b_{2}\\ -b_{3}& b_{4}\end{bmatrix} 
\end{align*} 
	for some real coefficients \(a_{i},b_{i}\), \(i=1,\dots,4\).
By Theorem \ref{main_result_new} these coefficients have to satisfy the following linear system in order to reproduce polynomials up to degree 5
	\begin{align*}
	b_{1}&=\frac{1}{128}-3b_{2}, & &b_{4}=\frac{1}{1408}-\frac{384}{1408}b_{3}, & &a_{1}=\frac{1}{2}-b_{1}, \\ 
	a_{3}&=24b_{4}+9b_{3}+\frac{3}{4}, & &a_{4}=\frac{1}{4}-b_{4}-\frac{1}{2}a_{3}-\frac{3}{2}b_{3}, & &a_{2}=-\frac{1}{8}-3b_{2}-2b_{1}.
	\end{align*}
Choosing the coefficients \(b_{3}=0\) and \(b_{2}=\frac{1}{384}\) leads to \(a_{1}=\frac{1}{2}\), \(a_{2}= -17/128 \approx -0.13\), \(a_{3}=135/176 \approx 0.77\) and \(a_{4}=-189/1408\approx -0.13\). With this choice of coefficients the non-zero mask coefficients \(\bar{A}_{-1}, \bar{A}_{0}\) and \(\bar{A}_{1}\) of the scheme \(H_{\mathcal{\bar{A}}}\) are closely related to the corresponding ones of \(H_{\mathcal{A}}\). See Figure \ref{basic_limit} for the basic limit functions of the scheme \(H_{\mathcal{\bar{A}}}\).
\begin{figure}
\centering
\includegraphics[scale=1]{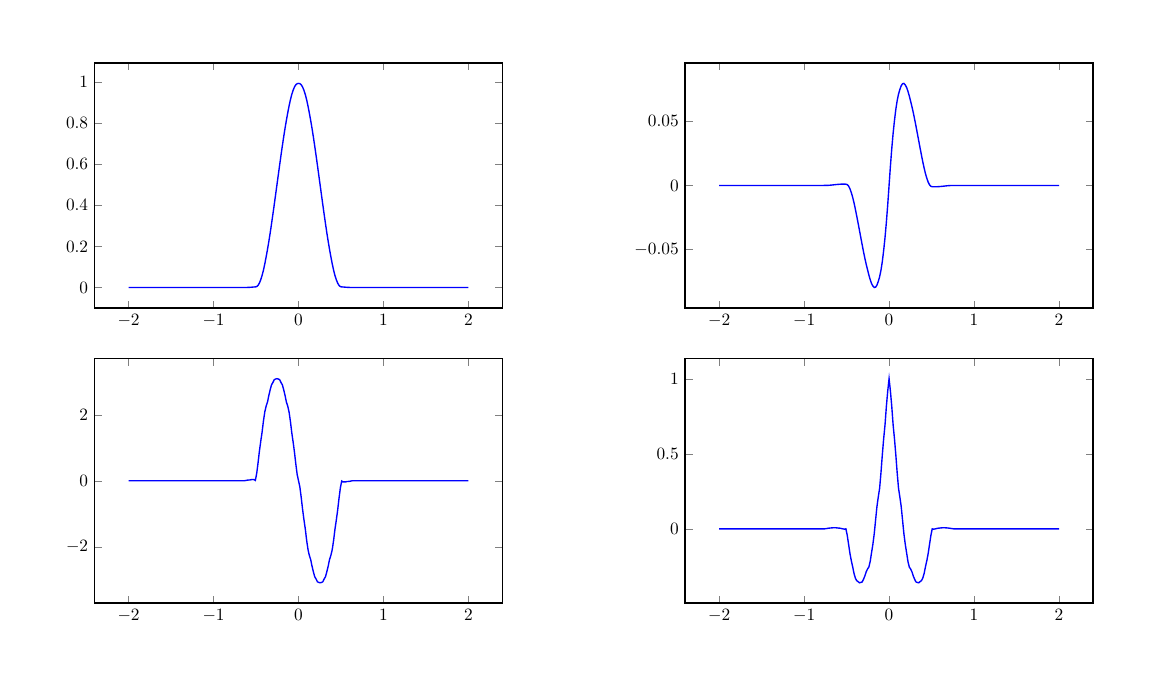}
\caption{Left column: Basic limit function and derivative of the interpolatory Hermite scheme \(H_{\mathcal{\bar{A}}}\) introduced in Section \ref{ex:interp} for initial data  \(\textbf{e}_{1,2}\) at 0 and \(\textbf{0}_{2}\) else. Right column: Basic limit function and derivative of the interpolatory Hermite scheme \(H_{\mathcal{\bar{A}}}\) introduced in Section \ref{ex:interp} for initial data  \(\textbf{e}_{2,2}\) at 0 and \(\textbf{0}_{2}\) else.} 
		\label{basic_limit}
\end{figure}

\medskip We now consider the de Rham transform of the interpolatory Hermite scheme \(H_{\mathcal{A}}\) as introduced in \cite{conti1}. This scheme is a dual scheme, meaning that \(\tau=-\frac{1}{2}\). For \(\lambda,\mu \in \mathbb{R}\) the non-zero matrices of its mask (for simplicity again denoted by \(A_{i}\)) are given by
	\begin{align*}
	A_{-2}=\frac{1}{8}\begin{bmatrix} 2+4\lambda(1-\mu)& 4\lambda+2\lambda\mu\\ 4-2\mu-2\mu^2& \mu^2+8\lambda(1-\mu)\end{bmatrix}, &
	\quad A_{-1}=\frac{1}{8}\begin{bmatrix} 6-4\lambda(1-\mu) & 8\lambda-2\lambda\mu\\ 4-2\mu-2\mu^2& \mu^2-8\lambda(1-\mu)+2\mu \end{bmatrix}, \\ 	
	A_{0}=\frac{1}{8}\begin{bmatrix} 6-4\lambda(1-\mu) & -8\lambda+2\lambda\mu\\ -4+2\mu+2\mu^2& \mu^2-8\lambda(1-\mu)+2\mu \end{bmatrix}, &
	\quad A_{1}=\frac{1}{8}\begin{bmatrix} 2+4\lambda(1-\mu)& -4\lambda-2\lambda\mu\\ -4+2\mu+2\mu^2& \mu^2+8\lambda(1-\mu)\end{bmatrix}.
%\	
	\end{align*}
We see that the scheme reproduces constants since it satisfies (\ref{condition1_k=0_new}) and (\ref{condition2_k=0_new}).
Furthermore, we obtain
	\begin{align*}
	\textbf{A}^{(1)}(-1)\textbf{e}_{1,2}+2\textbf{A}(-1)\textbf{e}_{2,2}&=\frac{1}{8}\begin{bmatrix} 16\lambda(1-\mu) \\ 0 \end{bmatrix}+\frac{2}{8}\begin{bmatrix} -8\lambda+8\lambda\mu \\ 0 \end{bmatrix}=\textbf{0}_{2},\\
	\textbf{A}^{(1)}(1)\textbf{e}_{1,2}-2\textbf{A}(1)\textbf{e}_{2,2}&=\frac{1}{8}\begin{bmatrix} -8 \\ -16+8\mu+8\mu^2\end{bmatrix}-\frac{2}{8}\begin{bmatrix} 0 \\ 4\mu^2+4\mu\end{bmatrix}=\begin{bmatrix} -1 \\ -2\end{bmatrix}.
	\end{align*}	 
Since \(2q_{1,2\tau}(-\frac{\tau}{2})=-1\) and \(\tilde{q}_{1,2\tau}(-\frac{\tau}{2})=-2\) we conclude that the scheme reproduces linear polynomials by Theorem \ref{main_result_new}. Next, we check if the scheme also reproduces polynomials of degree 2. By Table \ref{table_c_kl} we have \(\alpha^{1}_{2,1}=4\) and \(\alpha^{1}_{2,2}=2\). We compute 
\(2q_{2,2\tau}\Big(-\frac{\tau}{2}\Big)=\frac{3}{2}\) and \(\tilde{q}_{2,2\tau}\Big(-\frac{\tau}{2}\Big)=4.
\)
According to Theorem \ref{main_result_new} we have to check if the scheme satisfies the two relations
	\begin{align*}
	\textbf{A}^{(2)}(-1)\textbf{e}_{1,2}+4\textbf{A}^{(1)}(-1)\textbf{e}_{2,2}+2\textbf{A}(-1)\textbf{e}_{2,2}&=\textbf{0}_{2},\\
	\textbf{A}^{(2)}(1)\textbf{e}_{1,2}-4\textbf{A}^{(1)}(1)\textbf{e}_{2,2}+2\textbf{A}(1)\textbf{e}_{2,2}&=\begin{bmatrix} \frac{3}{2} \\ 4\end{bmatrix}
	\end{align*}
in order to decide whether it reproduces \(\prod_2\) or not.
Computations lead to
	\begin{align*}
	\textbf{A}^{(2)}(-1)\textbf{e}_{1,2}+4\textbf{A}^{(1)}(-1)\textbf{e}_{2,2}+2\textbf{A}(-1)\textbf{e}_{2,2}&=\begin{bmatrix} 0 \\ 2-2\mu+16\lambda-16\lambda\mu \end{bmatrix},\\
	\textbf{A}^{(2)}(1)\textbf{e}_{1,2}-4\textbf{A}^{(1)}(1)\textbf{e}_{2,2}+2\textbf{A}(1)\textbf{e}_{2,2}&=\begin{bmatrix} 3+12\lambda \\ 4\end{bmatrix}.
	\end{align*}
We conclude that the scheme reproduces polynomials up to degree 2 if and only if \(\lambda=-\frac{1}{8}\). Similar computations for the case of cubic polynomials show that the choice of \(\mu=-\frac{1}{2}\) leads to the reproduction of cubic polynomials. 

%---------------------------------------------------------------------------------------------------------------------

\section{Generalization to Hermite schemes of order $d=3$}
As a matter of fact, the known examples of Hermite schemes are of dimension $d=2,3$. Therefore, in this section, we generalize our previous results to Hermite subdivision schemes of order $d=3$. Nevertheless, since the proofs turn out to be similar, we here put in evidence only the differences. The first one is the need of a third auxiliary class of polynomials. 

%--------------------------------------------------------------
\subsection{Polynomials $\hat{q}_k$}
Before we state the algebraic conditions for the reproduction property of Hermite subdivision schemes of order $d=3$, we introduce a new class of polynomials \(\hat{q}_{k,i}\) similar to the polynomials \(\tilde{q}_{k,i}\) in (\ref{def_polynomial_2}).
First, we recursively define a new set of coefficients related to the coefficients \(\alpha^1_{k,\ell},\ \ell=1,\dots,k\) of Definition \ref{def_coefficients_c}. 
\begin{Def}\label{coefficients2}
Let $k\in \mathbb{N}$, $k\geqslant 2$. We define
	\begin{align*}
	\begin{array}{ll}
	\alpha^{2}_{k,2}&:=4k(k-1) \\
	\alpha^{2}_{k,k-n+2}&:=(-1)^{k}2^{-n+2}\Big(n(n-1)\gamma_{n}^{k}-\sum_{j=2}^{k-n+1}(-1)^j\alpha^{2}_{k,j}\gamma_{n-2}^{k-j}\Big) \quad n=k-1,\dots ,2.
	\end{array}
	\end{align*}
\end{Def}
We continue by defining the new polynomials $\hat{q}_{k,i}$, $k\geqslant 0$, $i\in  \mathbb{Z}$, as
\begin{align} \label{def_polynomial_3}
\hat{q}_{0,i}:=0,\quad \hat{q}_{1,i}:=0,\quad \hat{q}_{k,i}(x):=\sum_{n=2}^{k}(-1)^{n}\alpha^{2}_{k,n}q_{k-n,i}(x)\quad k\geqslant 2,\  i \in \mathbb{Z}.
	\end{align}
For a fixed  \(k \geqslant 2\) the polynomial \(\hat{q}_{k,i}(x)\) is of degree \(k-2\). Thus, we can write it in the form
	\begin{align}
	\hat{q}_{k,i}(-x)&=\sum_{n=0}^{k-2}\hat{\gamma}_{n}^{k,i}x^n, \quad \hbox{for some coefficients} \quad \hat{\gamma}_{n}^{k,i}\in \mathbb{R}.
	 \label{coefficients3}
	\end{align}
Similarly as in Proposition \ref{relation_coefficients} one can show that the following relation between the coefficients of the polynomials in (\ref{coefficients1}) and (\ref{coefficients3}) holds true.
For all \(i \in \mathbb{Z}\) and \(k\geqslant 2\) we obtain
	\begin{align*}
	\gamma_{n}^{k,i}=\frac{1}{n(n-1)}\hat{\gamma}_{n-2}^{k,i} \quad \text{for}~~n=2,\dots, k.
	\end{align*}
We state the main theorem of the previous section for Hermite schemes of order $d=3$ whose proof is omitted since it follows the same lines of reasoning of Theorem \ref{main_result_new}.
\begin{thm}  \label{main_result_3}
Let \(H_{\mathcal{A}}\) be a Hermite subdivision scheme of order 3 with parametrization \(\tau\). Then \(H_{\mathcal{A}}\) reproduces constants if and only if
$$
	\textbf{A}(-1)\textbf{e}_{1,3}=\textbf{0}_{3},  \quad
\textbf{A}(1)\textbf{e}_{1,3}=2\textbf{e}_{1,3}. 
$$
Moreover, \(H_{\mathcal{A}}\) reproduces polynomials up to degree \(m \geqslant 1\) if and only if it reproduces constants and 
	\begin{align*}
	\textbf{A}^{(k)}(-1) \textbf{e}_{1,3}+\sum_{\ell =1}^k \alpha^{1}_{k,\ell}\cdot \textbf{A}^{(k-\ell)}(-1) \textbf{e}_{2,3}+\sum_{\ell =2}^k \alpha^{2}_{k,\ell}\cdot \textbf{A}^{(k-\ell)}(-1) \textbf{e}_{3,3}&=\textbf{0}_{3},\\
\textbf{A}^{(k)}(1) \textbf{e}_{1,3}+\sum_{\ell =1}^k \tilde{\alpha}^{1}_{k,\ell} \cdot \textbf{A}^{(k-\ell)}(1) \textbf{e}_{2,3}+\sum_{\ell =2}^k \tilde{\alpha}^{2}_{k,\ell} \cdot \textbf{A}^{(k-\ell)}(1)  \textbf{e}_{3,3}&= 
\begin{bmatrix} 2q_{k,2\tau}(\frac{-\tau}{2}) \\\tilde{q}_{k,2\tau}(\frac{-\tau}{2}) \\ \frac{1}{2}\hat{q}_{k,2\tau}(\frac{-\tau}{2})\end{bmatrix}
	\end{align*}
for all \(k=1,\dots ,m\ \) with $\ \tilde{\alpha}^{1}_{k,\ell}=(-1)^{\ell}\alpha^{1}_{k,\ell}$ and $\tilde{\alpha}^{2}_{k,\ell}=(-1)^{\ell}\alpha^{2}_{k,\ell}, \ \ell=1,\dots, k,$ with $\alpha^{1}_{k,\ell},\ \alpha^{2}_{k,\ell}$ as in Definition \ref{def_coefficients_c} and Definition \ref{coefficients2}, respectively.

\end{thm}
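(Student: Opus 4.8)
The plan is to mimic the architecture developed for $d=2$, replacing the two-component vector polynomial $Q_{k,i}$ by a three-component one and carrying out the same chain of equivalences. First I would introduce the vector polynomial
\[
\widehat{Q}_{k,i}(x):=\begin{bmatrix} q_{k,i}(x)\\ \tilde q_{k,i}(x)\\ \hat q_{k,i}(x)\end{bmatrix},\qquad k\geqslant 0,\ i\in\mathbb{Z},
\]
and establish the analogue of Lemma \ref{lemma_reproduction_constants_new}: reproduction of constants is equivalent to $\textbf{A}(\pm1)\textbf{e}_{1,3}$ taking the stated values, via the even/odd splitting $\textbf{A}(1)\pm\textbf{A}(-1)=2\textbf{A}_e(1)$ resp.\ $2\textbf{A}_o(1)$. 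Then I would prove the $d=3$ versions of Lemma \ref{lemma1_new} and Proposition \ref{corollary1_new}: using $\textbf{A}^{(k)}(-1)=(-1)^k\textbf{A}_e^{(k)}(1)+(-1)^{k+1}\textbf{A}_o^{(k)}(1)$ one sees that the ``$-1$'' conditions are equivalent to the corresponding even-part expression equalling the odd-part expression, and combining with the ``$+1$'' conditions each of these equals one half of the full-symbol expression; translating through the identities
\[
\textbf{A}_e^{(k)}(1)=\sum_{j\in\mathbb{Z}}q_{k,2t+2\tau}(-j-\tau)A_{2(t-j)},\qquad
\textbf{A}_o^{(k)}(1)=\sum_{j\in\mathbb{Z}}q_{k,2t+2\tau+1}(-j-\tau)A_{2(t-j)+1},
\]
and the definitions \eqref{def_polynomial_2}, \eqref{def_polynomial_3} of $\tilde q_{k,i}$, $\hat q_{k,i}$ as combinations of the $q_{k-\ell,i}$ with coefficients $(-1)^\ell\alpha^1_{k,\ell}$, $(-1)^\ell\alpha^2_{k,\ell}$, this converts the algebraic conditions into the single pointwise identity
\[
\sum_{j\in\mathbb{Z}}A_{i-2j}\widehat{Q}_{k,i+2\tau}(-j-\tau)=\begin{bmatrix} q_{k,i+2\tau}(\tfrac{-i-\tau}{2})\\ \tfrac12\tilde q_{k,i+2\tau}(\tfrac{-i-\tau}{2})\\ \tfrac12\hat q_{k,i+2\tau}(\tfrac{-i-\tau}{2})\end{bmatrix},\quad k=1,\dots,m .
\]

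Next I would prove the $d=3$ analogue of Proposition \ref{prop_reproduction_new}, namely that the full system of algebraic conditions is equivalent to the moment-type identities
\[
\sum_{j\in\mathbb{Z}}A_{i-2j}\textbf{e}_{1,3}=\textbf{e}_{1,3},\qquad
\sum_{j\in\mathbb{Z}}A_{i-2j}\begin{bmatrix}(j+\tau)^k\\ k(j+\tau)^{k-1}\\ k(k-1)(j+\tau)^{k-2}\end{bmatrix}=\frac{1}{2^k}\begin{bmatrix}(i+\tau)^k\\ k(i+\tau)^{k-1}\\ k(k-1)(i+\tau)^{k-2}\end{bmatrix}
\]
for $k=1,\dots,m$. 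This is again an induction on $m$: one expands each component polynomial of $\widehat{Q}_{m,i+2\tau}$ in the monomial basis $\{(j+\tau)^n\}$, isolates the top-degree term using the three relations $\tilde\gamma^{m,i}_{m-1}=m\gamma^{m,i}_m$ and $\hat\gamma^{m,i}_{m-2}=m(m-1)\gamma^{m,i}_m$ (the latter being the $d=3$ counterpart of Proposition \ref{relation_coefficients}, stated in the excerpt just before the theorem), applies the induction hypothesis and constant-reproduction to the lower-order remainder, and checks that the telescoping collapses exactly as in the $d=2$ case because $\gamma^{m,i}_m\neq0$. Finally, the theorem itself follows by the same induction as in the proof of Theorem \ref{main_result_new}: write $p(x)=x^m+g(x)$, apply \eqref{subdivision_rule} with $\textbf{D}=\mathrm{diag}(1,\tfrac12,\tfrac14)$, use the moment identities on the leading term and the induction hypothesis on $g$, and read off that $\textbf{f}_{n+1}(i)$ has the required Hermite form.

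The main obstacle is entirely contained in the third auxiliary class: I must verify that the recursion of Definition \ref{coefficients2} for $\alpha^2_{k,\ell}$ is precisely what makes the coefficient identity $\gamma^{k,i}_n=\tfrac{1}{n(n-1)}\hat\gamma^{k,i}_{n-2}$ hold for all $i$, $n=2,\dots,k$ — this is the $d=3$ replacement for Proposition \ref{relation_coefficients}, and its proof is the delicate combinatorial computation: one first checks the case $n=k$ against $\alpha^2_{k,2}=4k(k-1)$ and Lemma \ref{recursion_gamma}, then uses $\gamma^{n-2}_{n-2}=(-1)^{n-2}2^{n-2}$ to rewrite Definition \ref{coefficients2} as $n(n-1)\gamma^k_n=\sum_{j=2}^{k-n+2}(-1)^j\alpha^2_{k,j}\gamma^{k-j}_{n-2}$, and finally transports this from the $i=0$ coefficients $\gamma^k_r$ to general $\gamma^{k,i}_n$ via \eqref{eq:gammai-gamma} and the binomial identity $r(r-1)\binom{r-2}{n-2}=n(n-1)\binom{r}{n}$ (replacing the identity $r\binom{r-1}{n-1}=n\binom{r}{n}$ used in the $d=2$ proof). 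Once this single lemma is in hand, every other step is a mechanical transcription of the $d=2$ arguments with an extra third coordinate carried along, which is exactly why the paper is content to omit the details.
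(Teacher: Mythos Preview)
Your proposal is correct and follows exactly the route the paper itself indicates, namely transcribing the $d=2$ chain Lemma~\ref{lemma_reproduction_constants_new} $\to$ Lemma~\ref{lemma1_new} $\to$ Proposition~\ref{corollary1_new} $\to$ Proposition~\ref{prop_reproduction_new} $\to$ Theorem~\ref{main_result_new} with an added third component governed by the $\hat q_{k,i}$ and the relation $\gamma_n^{k,i}=\tfrac{1}{n(n-1)}\hat\gamma_{n-2}^{k,i}$. One small slip: in your pointwise identity the third entry on the right should be $\tfrac14\hat q_{k,i+2\tau}\big(\tfrac{-i-\tau}{2}\big)$ rather than $\tfrac12$, since that vector is one half of the theorem's right-hand side $\big(2q,\ \tilde q,\ \tfrac12\hat q\big)^T$; with the factor $\tfrac14$ the leading term $\tfrac14\hat\gamma_{m-2}^{m,i+2\tau}\big(\tfrac{i+\tau}{2}\big)^{m-2}=m(m-1)\gamma_m^{m,i+2\tau}(i+\tau)^{m-2}/2^{m}$ matches the $2^{-m}$ scaling in the moment identity, and the induction closes.
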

Note that in the Theorem above we assume the convention that the last sums are not active  if $k=1$.

\smallskip We combine our previous two results into one to give a more general form. Therefore, let \(d=2,3\) be the degree of a Hermite subdivision scheme \(H_{\mathcal{A}}\).
Moreover, let
	\begin{align*}
	\textbf{q}_{2}= \begin{bmatrix} 2q_{k,2\tau}(\frac{-\tau}{2}) \\\tilde{q}_{k,2\tau}(\frac{-\tau}{2})\end{bmatrix} \quad \text{and} \quad \textbf{q}_{3}=\begin{bmatrix} 2q_{k,2\tau}(\frac{-\tau}{2}) \\\tilde{q}_{k,2\tau}(\frac{-\tau}{2}) \\ \frac{1}{2}\hat{q}_{k,2\tau}(\frac{-\tau}{2})\end{bmatrix}.
	\end{align*}

\begin{thm} \label{main_result_2,3}
The Hermite scheme \(H_{\mathcal{A}}\) of order \(d=2, 3\) reproduces constants if and only if

$$
	\textbf{A}(-1)\textbf{e}_{1,d}=\textbf{0}_{d},\quad
	\textbf{A}(1)\textbf{e}_{1,d}=2\textbf{e}_{1,d}.
	$$
	
Moreover, \(H_{\mathcal{A}}\) reproduces polynomials up to degree \(m \geqslant 1\) if and only if it reproduces constants and 
	\begin{align*}
	\textbf{A}^{(k)}(-1) \textbf{e}_{1,d}+\sum_{s=2}^{d}\Big(\sum_{\ell =s-1}^k \alpha^{d}_{k,\ell}\cdot \textbf{A}^{(k-\ell)}(-1) \textbf{e}_{s,d}\Big)&=\textbf{0}_{d},\\
\textbf{A}^{(k)}(1) \textbf{e}_{1,d}+\sum_{s=2}^{d}\Big(\sum_{\ell =s-1}^k \tilde{\alpha}^{d}_{k,\ell}\cdot \textbf{A}^{(k-\ell)}(1) \textbf{e}_{s,d}\Big)&=\textbf{q}_{d}.
	\end{align*}
for all \(k=1,\dots ,m\) with \(\tilde{\alpha}^{1}_{k,\ell}=(-1)^{\ell}\alpha^{1}_{k,\ell}\) and \(\tilde{\alpha}^{2}_{k,\ell}=(-1)^{\ell}\alpha^{2}_{k,\ell},\ \ell=1,\dots,k\) 
with $\alpha^{1}_{k,\ell},\ \alpha^{2}_{k,\ell}$ and  as in Definition \ref{def_coefficients_c} and Definition \ref{coefficients2}, respectively.
\end{thm}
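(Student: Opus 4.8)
The plan is to deduce Theorem \ref{main_result_2,3} from the two cases already isolated, namely Theorem \ref{main_result_new} for \(d=2\) and Theorem \ref{main_result_3} for \(d=3\), so that the only actual work is to check that the compact formulation unfolds to those statements. For \(d=2\) the outer sum \(\sum_{s=2}^{d}\) has only the term \(s=2\), its inner range \(\ell=s-1,\dots,k\) becomes \(\ell=1,\dots,k\) with coefficient \(\alpha^{1}_{k,\ell}\) (respectively \(\tilde\alpha^{1}_{k,\ell}\) at \(1\)), and \(\textbf{q}_{2}\) is exactly the right-hand side of \eqref{condition2_new}; hence the two displayed identities become \eqref{condition1_new}--\eqref{condition2_new}, and the equivalence with reproduction of \(\prod_m\) is Theorem \ref{main_result_new}. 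For \(d=3\) the outer sum contributes \(s=2\) (coefficient \(\alpha^{1}_{k,\ell}\), range \(\ell\geqslant 1\)) and \(s=3\) (coefficient \(\alpha^{2}_{k,\ell}\), range \(\ell\geqslant 2\), the latter forced by \(\hat q_{k,i}\in\prod_{k-2}\)), while \(\textbf{q}_{3}\) is the right-hand side of the second identity of Theorem \ref{main_result_3}; so the statement is Theorem \ref{main_result_3}. Since \(d\in\{2,3\}\), this finishes the argument.

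For completeness I would also record, once, the uniform mechanism that proves both cases, so that the reduction above is transparent. First, mirroring Proposition \ref{prop_reproduction_new}, one shows by induction on \(m\) that \(H_{\mathcal A}\) reproduces \(\prod_m\) with parametrization \(\tau\) if and only if the vector sum rules
\[
\sum_{j\in\mathbb{Z}}A_{i-2j}\,\mathbf{v}_k(j+\tau)=\frac{1}{2^{k}}\,\mathbf{v}_k(i+\tau),\qquad i\in\mathbb{Z},\ k=0,\dots,m,
\]
hold, where the \(s\)-th entry of \(\mathbf{v}_k(x)\) is the \((s-1)\)-st derivative of \(x^{k}\); in the step from \(m-1\) to \(m\) the monomial \(x^{m}\) is handled by the \(k=m\) rule and \(\prod_{m-1}\) by the induction hypothesis, exactly as in the proof of Theorem \ref{main_result_new}. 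Second, one builds \(d\) classes of auxiliary polynomials \(q_{k,i},\tilde q_{k,i},\hat q_{k,i},\dots\) as in \eqref{def_polynomial_2} and \eqref{def_polynomial_3}, the \(s\)-th being \(\sum_{n}(-1)^{n}\alpha^{s-1}_{k,n}q_{k-n,i}\); the key structural fact is the coefficient-alignment relation extending Proposition \ref{relation_coefficients}, that the \(n\)-th coefficient of the first class equals \(\frac{1}{n}\) times the \((n-1)\)-st of the second, which equals \(\frac{1}{n(n-1)}\) times the \((n-2)\)-nd of the third, and so on. Third, expressing \(\textbf{A}_e^{(k)}(1),\textbf{A}_o^{(k)}(1)\) through the \(q_{k,i}\) as in the proof of Proposition \ref{corollary1_new}, splitting the sum rules over even and odd \(i\) as in Lemma \ref{lemma1_new}, and using \(\textbf{A}^{(k)}(-1)=(-1)^{k}\textbf{A}_e^{(k)}(1)+(-1)^{k+1}\textbf{A}_o^{(k)}(1)\), the alignment relation folds the \(d\) scalar moment identities of order \(k\) into the single vectorial symbol conditions at \(1\) (with right-hand side \(\textbf{q}_{d}\)) and at \(-1\) (homogeneous).

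The one genuinely new computation, and the step I expect to be the main obstacle, is the coefficient-alignment identity for the third class, \(\gamma_n^{k,i}=\frac{1}{n(n-1)}\hat\gamma_{n-2}^{k,i}\); this is precisely why the coefficients \(\alpha^{2}_{k,\ell}\) must be prescribed by their given recursion, and I would prove it exactly as Proposition \ref{relation_coefficients}: first the case \(i=0\), using that recursion together with \(\gamma^{n-2}_{n-2}=(-1)^{n-2}2^{n-2}\), and then the passage to arbitrary \(i\) through the binomial expansion \eqref{eq:gammai-gamma}. Alongside it, the only other thing to watch is the index bookkeeping in \(\sum_{s=2}^{d}\sum_{\ell=s-1}^{k}\), specifically that the \(s\)-th auxiliary class has degree \(k-s+1\) and therefore enters only from \(\ell=s-1\) on; everything else is word-for-word the \(d=2\) argument of Section 4, which is why presenting it as a derivation from Theorems \ref{main_result_new} and \ref{main_result_3} is both correct and the shortest route.
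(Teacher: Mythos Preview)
Your proposal is correct and matches the paper's approach exactly: the paper presents Theorem \ref{main_result_2,3} simply as a compact reformulation that ``combine[s] our previous two results into one,'' with no separate proof, so the intended argument is precisely the unpacking you describe into Theorem \ref{main_result_new} for \(d=2\) and Theorem \ref{main_result_3} for \(d=3\). Your additional sketch of the uniform mechanism (the vector sum rules, the auxiliary polynomial classes, and the coefficient-alignment identity \(\gamma_n^{k,i}=\frac{1}{n(n-1)}\hat\gamma_{n-2}^{k,i}\)) goes beyond what the paper writes out but faithfully reproduces the structure of Section~4 and the remarks preceding Theorem \ref{main_result_3}.
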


Theorem \ref{main_result_2,3} is the one we plan to extend to Hermite subdivision schemes of any degree \(d\geqslant 3\).

%---------------------------------------------------------------------------------------------------------------------

\subsection{Example of interpolatory scheme of order $d=3$}
Consider the primal and interpolatory Hermite scheme studied in \cite{conti1}. The non-zero matrices of its mask are given by
	\begin{align*}
	A_{-1}=\textbf{D}\begin{bmatrix} \lambda_{1} & \lambda_{2}& \lambda_{3}\\ \mu_{1} & \mu_{2}& \mu_{3}\\ \epsilon_{1} & \epsilon_{2}& \epsilon_{3}\\
 	\end{bmatrix}, \quad A_{0}=\textbf{D}, \quad A_{1}=\textbf{D}\begin{bmatrix} \lambda_{1} & -\lambda_{2}& \lambda_{3}\\ -\mu_{1} & \mu_{2}& -\mu_{3}\\ \epsilon_{1} & -\epsilon_{2}& \epsilon_{3}\\
 	\end{bmatrix}
 	\end{align*}
with \(\textbf{D}=diag(1, \frac{1}{2}, \frac{1}{4})\) and parameters \(\lambda_i, \mu_i, \epsilon_i \in \mathbb{R}\).
It i known that the scheme reproduces polynomials up to degree 3 if 
	\begin{align} \label{Equation3_1}
\lambda_{1}=\frac{1}{2}, ~~~ \epsilon_{1}=0, ~~~ \mu_{2}=\frac{1-\mu_{1}}{2},~~~ \epsilon_{3}=\frac{1-\epsilon_{2}}{2},~~ \lambda_{3}=\frac{-1-8\lambda_{2}}{16},~~ \mu_{3}=\frac{2\mu_{1}-3}{24}.
	\end{align}
Computations show that this coincides with our algebraic conditions given by Theorem \ref{main_result_3}. 

In particular, the first two relations of (\ref{Equation3_1}) are necessary for the reproduction of constants.

In order to reproduce linear polynomials the scheme has to satisfy \(\mu_{2}=\frac{1-\mu_{1}}{2}\). Quadratic reproduction leads to \(\epsilon_{3}=\frac{1-\epsilon_{2}}{2}\) and \(\lambda_{3}=\frac{-1-8\lambda_{2}}{16}\). The condition \(\mu_{3}=\frac{2\mu_{2}-3}{24}\) belongs to the reproduction of polynomials of degree 3.

\subsection*{Acknowledgements}
This work was initiated while the  second author was visiting DIEF of the University of Florence. Support from the Austrian Science Fund (FWF): W1230 is gratefully acknowledged.
This research has been accomplished within RITA (Research Italian network on Approximation).

\end{document}